\newtheorem{theorem}{Theorem}[section]
\newtheorem{lemma}[theorem]{Lemma}
\newtheorem{prop}[theorem]{Proposition}
\newtheorem{definition}{Definition}
\newcommand{\bfc}{\mathbf{c}}
\newcommand{\bfp}{\mathbf{p}}
\newcommand{\bfq}{\mathbf{q}}
\newcommand{\bfx}{\mathbf{x}}
\newcommand{\bfy}{\mathbf{y}}
\newcommand{\bfz}{\mathbf{z}}
\newcommand{\RR}{\mathbb{R}}
\newcommand{\CC}{\mathbb{C}}
\newcommand{\Om}{\Omega}
\newcommand{\la}{\langle}
\newcommand{\ra}{\rangle}
\newcommand{\SingleOmega}{\mathcal{S}_{\partial\Omega}}
\newcommand{\KstarOmega}{\mathcal{K}_{\partial\Omega}^{*}}
\newcommand{\Kcal}{\mathcal{K}}
\newcommand{\Scal}{\mathcal{S}}
\newcommand{\SingleD}{\mathcal{S}_{\partial D}}
\newcommand{\KstarD}{\mathcal{K}_{\partial D}^{*}}
\newcommand{\KstarOm}{\mathcal{K}_{\partial \Omega}^{*}}
\newcommand{\KD}{\mathcal{K}_{\partial D}}
\newcommand{\Rtwo}{\mathbb{R}^2}
\newcommand{\LtwobdOmega}{L^2(\partial\Omega)}
\newcommand{\LtwozerobdOmega}{L_0^2(\partial\Omega)}
\newcommand{\Hstar}{{\mathcal{H}}^*}
\newcommand{\SingleBR}{{\mathcal{S}_{\partial B_R}}}
\newcommand{\SingleBr}{{\mathcal{S}_{\partial B_r}}}
\newcommand{\SingleBmr}{{\mathcal{S}_{\partial B_{-r}}}}
\newcommand{\KstarBR}{{\mathcal{K}_{\partial B_R}^{*}}}
\newcommand{\KstarBr}{{\mathcal{K}_{\partial B_r}^{*}}}
\newcommand{\KstarBmr}{{\mathcal{K}_{\partial B_{-r}}^{*}}}
\newcommand{\chipBR}{\chi_{\partial B_R}}
\newcommand{\tvarphi}{{\widetilde{\varphi}}}
\newcommand{\tpsi}{{\widetilde{\psi}}}
\newcommand{\hvarphi}{{\widehat{\varphi}}}
\newcommand{\hpsi}{{\widehat{\psi}}}
\newcommand{\tq}{{\tilde{q}}}
\newcommand{\eqnref}[1]{(\ref {#1})}
\newcommand{\p}{\partial}
\newcommand{\beq}{\begin{equation}}
\newcommand{\eeq}{\end{equation}}
\newcommand{\LtwoD}{L^2(\partial D)}
\newcommand{\LtwoOm}{L^2(\partial \Omega)}
\numberwithin{equation}{section}
\numberwithin{figure}{section}
\begin{document}

\title{Spectral analysis of the Neumann--Poincar\'{e} operator on the crescent-shaped domain and touching disks and analysis of plasmon resonance
%\title{Series expansion (Shape basis) for interface problem for general shape domains
 \thanks{\footnotesize
This work is supported by the Basic Science Research Program through the National Research Foundation of Korea(NRF) grants NRF-2019R1A6A1A10073887, NRF-2016R1A2B4014530 and NRF-2021R1A2C1011804.}
}
\author{
Younghoon Jung\thanks{\footnotesize Department of Mathematical Sciences, Korea Advanced Institute of Science and Technology, Daejeon 34141, Korea (hapy1010@kaist.ac.kr).}  \and
Mikyoung Lim\thanks{\footnotesize Department of Mathematical Sciences, Korea Advanced Institute of Science and Technology, Daejeon
34141, Korea (mklim@kaist.ac.kr).} 
}

\date{\today}
\maketitle
\begin{abstract}
We consider the Neumann--Poincar\'{e} operator on a planar domain enclosed by two touching circular boundaries. This domain, which is a crescent-shaped domain or touching disks, has a cusp at the touching point of two circles. We analyze the operator via the Fourier transform on the boundary circles of the domain. In particular, we define a Hilbert space on which the operator is bounded, self-adjoint. We then obtain the complete spectral resolution of the Neumann--Poincar\'{e} operator. On both the crescent-shaped domain and touching disks, the Neumann--Poincar\'{e} operator has only absolutely continuous spectrum on the closed interval $[-1/2,1/2]$. As an application, we analyze the plasmon resonance on the crescent-shaped domain and touching disks.

%\vskip .5cm
%
%\begin{center}
%\textbf{R\'{e}sum\'{e}}
%\end{center}
%
%Nous examinons l'op\'{e}rateur Neumann--Poincar\'{e} sur un domaine planaire entour\'{e} de deux fronti\`{e}res circulaires qui se touchent. Ce domaine, qui est un domaine en forme de croissant ou des disques en contact, a un point de rebroussement au point de contact de deux cercles. 
%Nous analysons cet op\'{e}rateur via la transform\'{e}e de Fourier sur les cercles des fronti\`{e}res du domaine. En particulier, nous d\'{e}finissons un espace de Hilbert sur lequel l'op\'{e}rateur Neumann--Poincar\'{e} est born\'{e}, auto-adjoint. Nous obtenons alors la r\'{e}solution spectrale compl\`{e}te de l'op\'{e}rateur Neumann--Poincar\'{e}. Sur un domaine en forme de croissant et aussi sur des disques en contact, l'op\'{e}rateur Neumann--Poincar\'{e} n'a que le spectre absolument continu sur l'intervalle ferm\'{e} $[-1/2,1/2]$. En guise d'application, nous analysons la r\'{e}sonance plasmonique sur un domaine en forme de croissant et des disques en contact.
\end{abstract}

 \noindent {\footnotesize {\bf AMS subject classifications.} 35P05, 35J05, 31A10} 
 
 \noindent {\footnotesize {\bf Key words.} 
 {Neumann--Poincar\'{e} operator; Touching disks; Spectral resolution; Resonance}}

\tableofcontents

\section{Introduction}
Spectral analysis of the Neumann--Poincar\'{e} (NP) operator has received much attention in recent years due to its applications to electromagnetic problems in metamaterials, such as localized surface plasmon resonance of nanoparticles and invisibility cloaking \cite{Ammari:2013:STN,Ando:2016:PRF,Bonnetier:2012:PBO,Bonnetier:2013:SPV,Grieser:2014:PEP,Helsing:2017:CSN,Mayergoyz:2005:EPR,Milton:2006:CEA}. The NP operator is a singular integral operator which naturally appears when one solves interface problems for the Laplacian by using the layer potentials. More precisely, given a simply connected bounded Lipschitz domain $\Om\subset\RR^2$, the NP operator $\Kcal_{\p \Om}^*$ is defined by
\beq\label{kstar:def}
  \KstarOm[\psi](x)=p.v.\, \frac{1}{2\pi}\int_{\partial D}\frac{\langle x-y,\nu_x\rangle}{|x-y|^2}\, \psi(y)\, d\sigma(y),\quad x\in \partial \Om,
\eeq
for a density function $\psi\in\LtwoOm$, where $p.v$. denotes the Cauchy principal value, and $\nu_x$ denotes the outward unit normal vector to $\partial \Om$ at $ x\in\partial \Om$. The NP operator is similarly defined in three dimensions \cite{Kellogg:1953:FPT,Verchota:1984:LPR}.
In this paper, we investigate the spectrum of the Neumann--Poincar\'{e} operator on a planar domain that is enclosed by two touching circular boundaries: a crescent-shaped domain and touching disks (see Figure \ref{fig:exampledomains}). This domain has a cusp at the touching point of the two boundary circles.  To the best knowledge of the authors, this is the first article providing the spectral resolution of the NP operator on a planar domain with a cusp.
\begin{figure}[h!]
    \subfloat{\includegraphics[height=5.2cm, width=4.6cm, trim={3.6cm 2cm 2cm 2cm}, clip]{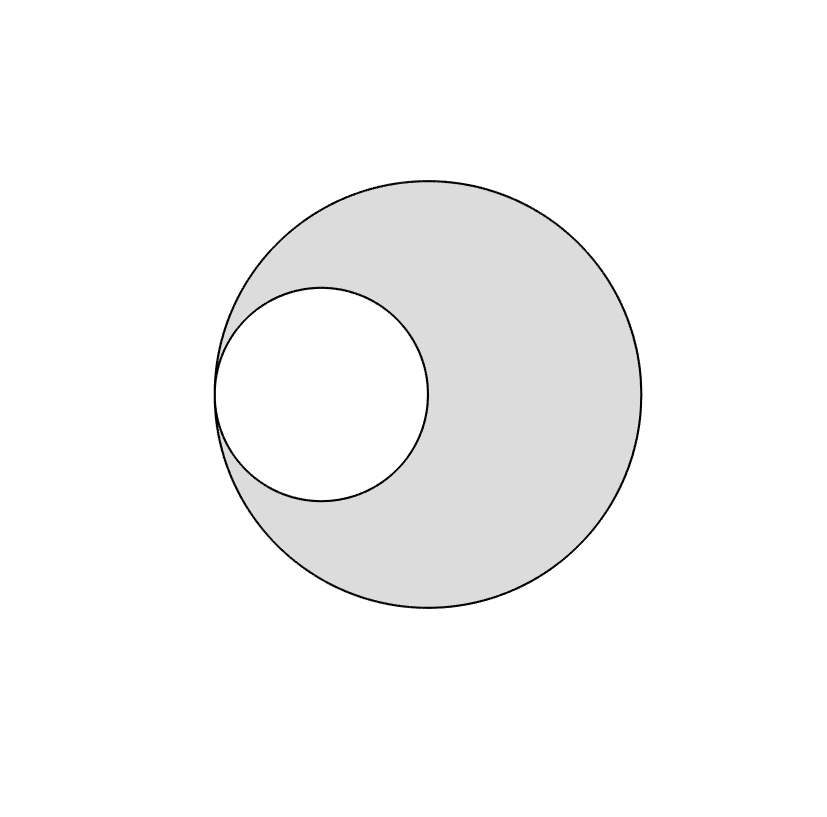}}
	\subfloat{\includegraphics[height=5.2cm,width=4.6cm, trim={3.6cm 2.2cm 2cm 2.2cm}, clip]{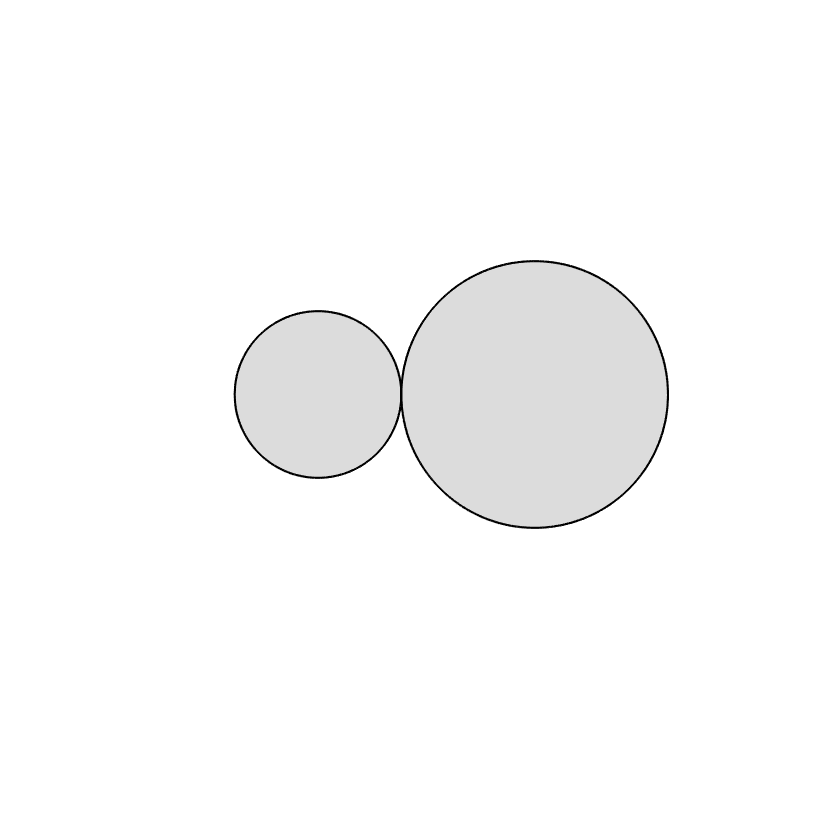}}
	\caption{A crescent-shaped domain (left) and touching disks (right).}
	\label{fig:exampledomains}
\end{figure}

For the case of a Lipschitz domain (without a cusp point on its boundary), the spectral property of the NP operator has been studied in many literatures. Let us review some essential results. We refer to a review article \cite{Ando:2020:SAN} and the references therein for more results. 
The NP operator is not symmetric on $L^2(\partial \Om)$ unless $\Om$ is a disk or a ball \cite{Lim:2001:SBI}. 
However, it can be realized as a self-adjoint operator on $H^{-1/2}_0(\partial \Om)$ with a new inner-product which is differently defined but equivalent to the original inner-product, based on Plemelj's symmetrization principle \cite{Ando:2016:APR,Kang:2016:SPO,Khavinson:2007:PVP}. 
Here, $H^{-1/2}_0(\partial \Om)$ is the Sobolev space $H^{-1/2}(\partial\Omega)$ with the mean-zero condition. We denote by $\mathcal{H}^*$ the space $H_0^{-1/2}(\partial\Omega)$ equipped with the new inner product. Since $\KstarOm$ is self-adjoint on $\mathcal{H}^*$, its spectrum $\sigma(\KstarOm)$ is a closed set contained in the real line. In fact, the spectrum of the NP operator on $L^2_0(\partial \Om)$ (or $\mathcal{H}^*$) is contained in $(-1/2,1/2)$  \cite{Escauriaza:2004:TPS,Fabes:1992:SRC,Kellogg:1953:FPT} (see also \cite{Kang:2018:SPS,Krein:1998:CLO} for the permanence of the spectrum for the NP operator with different norms).

For a $C^{1,\alpha}$ domain, $\KstarOm$ on $\mathcal{H}^{*}$ is compact as well as symmetric so that it admits only discrete real eigenvalues, namely $\lambda_j$, as its spectrum, where zero is the only possible accumulation point. 
The NP operator admits the spectral decomposition
\begin{equation}\label{eqn:spectraldecompintro}
  \KstarOm = \sum_{j=1}^\infty\lambda_j\psi_j\otimes\psi_j,
\end{equation}
where $\psi_j$ are eigenvectors corresponding to eigenvalues $\lambda_j$.
We refer to \cite{Ando:2018:EDE,Jung:2020:DEE,Miyanishi:2017:EED} for the decay estimates for the eigenvalues of the NP operator on a planar domain (see also \cite{Blumenfeld:1914:UPF} for the symmetricity of the spectrum on a planar domain). For simple shapes such as disks or ellipses, the complete sets of eigenvalues are known \cite{Ammari:2007:BIM}. 
We refer to  \cite{Ammari:2007:BIM, Feng:2016:SNP} for the eigenvalues of $\KstarOm$ of an ellipse or an ellipsoid and to \cite{Ando:2019:SSN} for the eigenvalue property of the NP operator of tori. The NP operator can also be defined for domains with separated components. For example, the eigenvalues of a domain consisting of two separated unit disks were explicitly expressed in terms of the distance between the disks \cite{Bonnetier:2012:PBO} (see also \cite{Bonnetier:2013:SPV,Lim:2015:AOT}).

For a Lipschitz domain with corners, the NP operator admits a continuous spectrum as well as eigenvalues, and it can be decomposed into three parts: the absolutely continuous spectrum, singularly continuous spectrum, and pure point spectrum (namely, $\sigma_{ac}(\KstarOm)$, $\sigma_{sc}(\KstarOm)$ and $\sigma_{pp}(\KstarOm)$, respectively).
It holds the following integral expression by the spectral theorem on a bounded, self-adjoint operator on a Hilbert space (see, for instance, \cite{Teschl:2009:MMQ,Yosida:1995:FA}):
\begin{equation}\label{eqn:generalspectraltheorem}
\KstarOm = \int_{\sigma(\KstarOm)}t \, d\mathbb{E}(t),	
\end{equation}
where $\mathbb{E}(t)$ is a resolution of identity for $\KstarOm$. 
Various studies have investigated the spectral properties of the NP operator for cornered domains \cite{Helsing:2017:CSN,Kang:2017:SRN,Li:2019:EEN,Perfekt:2014:SBN,Perfekt:2017:ESN}.
It was shown that the essential spectrum on a bounded planar domain with corners is an interval determined by the corner angles \cite{Helsing:2017:CSN,Kang:2017:SRN,Perfekt:2021:PEP,Perfekt:2014:SBN,Perfekt:2017:ESN}. 
For intersecting disks, the complete spectral resolution of the NP operator was obtained \cite{Kang:2017:SRN}, where $\sigma(\KstarOm)$ consists of only the absolutely continuous spectrum. 
In \cite{Helsing:2017:CSN}, a numerical method to determine the spectrum was developed by using its relation to the plasmon resonance rate. We refer to see \cite{Ando:2016:APR,Ando:2020:SAN,Dhia:2020:CMP,Bonnetier:2019:PRB,Bonnetier:2019:CES,Li:2019:EEN} for more results on the spectral properties of the NP operator and plasmon resonance.
Numerical examples obtained in \cite{Helsing:2017:CSN} show that all three kinds of spectrums (that is, $\sigma_{ac}(\KstarOm)$, $\sigma_{sc}(\KstarOm)$ and $\sigma_{pp}(\KstarOm)$) can appear depending on the domains. The existence of embedded eigenvalues within the essential spectrum was verified numerically \cite{Helsing:2017:CSN} and analytically \cite{Li:2019:EEN}. Also, it was shown that infinitely many embedded eigenvalues appear for a perturbed sphere by smoothly attaching a conical singularity \cite{Li:2020:IME}.

Touching disks can be considered as the limiting geometry of two different types of shapes. One is the limiting geometry of two separated disks as the distance between them tends to zero, and the other is that of intersecting disks whose external corner angle tends to zero.
The spectrum $\sigma(\KstarOm)$ for the domain consisting of two separated unit disks with the distance $\delta>0$ is a sequence of eigenvalues given by (see \cite{Bonnetier:2012:PBO})
\begin{equation}\label{eqn:twosepdiskseig}
\lambda_n^{\pm}=\pm\frac{(1+\delta-\sqrt{\delta(2+\delta)}\,)^{2n}}{2},\quad n=1,2,\dots.
\end{equation}
As $\delta$ tends to zero, $\lambda_n^{\pm}$ become densely located in $[-1/2,1/2]$.
For intersecting disks with external angle $\theta\in(0,\pi)$ at the corner points, it holds that (see \cite{Kang:2017:SRN})
\begin{equation}\label{eqn:intersectingdisksspect}
\sigma(\KstarOm)=\sigma_{ac}(\KstarOm)=[-b,b],\quad b = |1/2-\theta/\pi|.
\end{equation}
As the external angle $\theta$ tends to zero, the spectral bound $b$ tends to $1/2$. The extension of the results on the spectrum of the separated disks and intersecting disks,  \eqnref{eqn:twosepdiskseig} and \eqnref{eqn:intersectingdisksspect}, to touching disks is not straightforward as it involves the layer potential technique on a domain with a cusp point, which, to the best knowledge of the authors, has not been established. We need to generalize the NP operator defined on a Lipschitz domain to the considered domains with a cusp point in a suitable function space.

In the present paper, for $\Om$ a crescent-shaped domain and touching disks, we first define the NP operator on $L^2_0(\p\Om)$ as a boundary integral operator similarly to the Lipschitz domain case. We then further investigate the NP operator via the Fourier transform on the boundary circles of the domain. More precisely, two touching circular boundaries of the domain are mapped to two parallel lines with a M\"{o}bius transformation, and the Fourier transform is applied on the two parallel lines. By using the expression of $\KstarOm$ on $L^2_0(\p \Om)$ in terms of the Fourier transform, we define a Hilbert space (denoted by $K^{-1/2}_0$), which is analogous to $\mathcal{H}^*$ of the Lipschitz domain case, and generalize $\KstarOm$ on $L^2_0(\p \Om)$ to $K^{-1/2}_0$ such that $\KstarOm$ is bounded, self-adjoint. As a main result, we derive the complete spectral resolution of the NP operator on the space $K^{-1/2}_0$. It turns out that the spectrums of the NP operators on a crescent--shaped domain and touching disks are both 
\begin{equation*}
  \sigma(\KstarOmega)=\sigma_{ac}(\KstarOmega)=[-1/2,1/2].
\end{equation*}
Note that this is identical to the limit of the spectrum of the separated disks in \eqnref{eqn:twosepdiskseig} and that of intersecting disks in \eqnref{eqn:intersectingdisksspect} as $\delta$ and $\theta$ tend to zero, respectively. 
The analyses for the crescent-shaped domain and touching disks go similarly.  
We provide the full details for the crescent-shaped domain and briefly state the results for touching disks without a detailed proof. 

As an application of the spectral resolution of the NP operator, we compute the order of plasmon resonance on a crescent-shaped domain. It is worth remarking that the plasmon resonance for the crescent-shaped domain was studied in \cite{Aubry:2010:BPD} but without mathematical rigor.

The remainder of this paper is organized as follows. In Section 2, we briefly explain the properties of the layer potential operators on a Lipschitz domain and plasmon resonance. Section 3, Section 4, and Section 5 address the crescent-shaped domain case. 
Section 3 is devoted to deriving expressions of the layer potential operators by using a M\"{o}bius transformation and the Fourier transform. In Section 4, we define the Hilbert space $K^{-1/2}_0$ and derive the spectral resolution of the NP operator. We then analyze the plasmon resonance in Section 5. In Section 6, we derive the spectral resolution of the NP operator for touching disks.

\section{Preliminary: layer potential operators on a Lipschitz domain}\label{subsection:Lipschitzver}
Let $D$ be a simply connected bounded Lipschitz domain in $\mathbb{R}^2$. 
For a density function $\psi\in \LtwoD$, the single-layer potential $\SingleD[\psi]$ is defined by 
\begin{align*}
  \SingleD[\psi](\bfx)&=\int_{\partial D}\Gamma(\bfx-\bfy)\psi(\bfy)\, d\sigma(\bfy),\quad \bfx\in\Rtwo,
\end{align*}
where $\Gamma(\bfx)$ is the fundamental solution to the Laplacian, that is $\Gamma(\bfx) = \frac{1}{2\pi}\ln|\bfx|$. The single-layer potential is harmonic in $\Rtwo\setminus\partial D$ and satisfies the jump relations \cite{Verchota:1984:LPR}:
\beq\label{eqn:Kstarjump}
\begin{aligned}
\SingleD[\psi]\Big|^{+}(\bfx)&=\SingleD[\psi]\Big|^{-}(\bfx)\quad\mbox{a.e. }\bfx\in\partial D,\\
\frac{\partial}{\partial\nu}\SingleD[\psi]\Big|^{\pm}(\bfx)&=\Big(\pm\frac{1}{2}I+\KstarD\Big)\psi(\bfx)\quad\mbox{a.e. }\bfx\in\partial D,
\end{aligned}
\eeq
where the NP operator $\KstarD$ is given by \eqnref{kstar:def}, the symbol $+$ and $-$ stand for the limit to $\partial D$ from the outside and inside from $\partial\Omega$, respectively, and $p.v$ denotes the Cauchy principal value.

The NP operator $\KstarD$ is in general not self-adjoint on $L^2(\partial D)$; however, it can be symmetrized using Plemelj's symmetrization principle (see \cite{Khavinson:2007:PVP}):
\begin{equation}\label{calderonidentity}
  \SingleD\KstarD = \KD\SingleD,
\end{equation}
where $\KD$ is the $L^2$ adjoint of $\KstarD$. We denote by $\mathcal{H}^*$ the space $H^{-1/2}_0(\partial D)$ equipped with the inner product 
\begin{equation}\label{def:Hstarnorm}
  \langle\varphi,\psi \rangle_{\mathcal{H}^*}:=-\int_{\partial D}\varphi \, {\SingleOmega[\psi]}\, d\sigma\quad\mbox{for }\varphi,\psi\in  H_{0}^{-1/2}({\partial D})
\end{equation}
and let $\lVert\cdot\rVert_{\mathcal{H}^*}$ be the corresponding norm, which is equivalent to the $H^{-1/2}$ norm, i.e.,
\begin{equation*}
  C_1\lVert\varphi\rVert_{H^{-1/2}}\leq
     \lVert\varphi\rVert_{\mathcal{H}^{*}}  \leq 
  C_2\lVert\varphi\rVert_{H^{-1/2}}
\end{equation*}
for all $\varphi\in H^{-1/2}_0(\partial D)$ with some positive constants $C_1$ and $C_2$.  From \eqnref{calderonidentity}, $\KstarD$ is self-adjoint on $\mathcal{H}^*$. 
As a result, the spectrum of $\KstarD$ lies on the real axis. In fact, the spectrum of $\KstarD$ on $\mathcal{H}^*$ lies in ${(-1/2,1/2)}$ \cite{Escauriaza:1992:RTW,Verchota:1984:LPR}. 

When the domain $D$ is the disk of radius $r_0>0$ centered at $\bfc$, it holds that
\begin{equation}\label{NP:Disk}
  \KstarD[\varphi] = \frac{1}{4\pi r_0}\int_{\partial D}\varphi \, d\sigma\quad\mbox{on }\p D
\end{equation}
and that the spectrum of $\KstarD$ consists of eigenvalues $0$ and $\frac{1}{2}$. The eigenspace that corresponds to the eigenvalue $0$ is $\Hstar(\p D)$, and a constant function is an eigenfunction of $\KstarD$ corresponding to the eigenvalue $\frac{1}{2}$. 
The single-layer potential for the constant function $\frac{1}{r_0}$ is
\begin{equation}\label{Scal:Disk}
  \SingleD\Big[\frac{1}{r_0}\Big](\bfx) = 
  \begin{cases}
    \ln r_0\quad&\text{if } \bfx\in D,\\
    \ln |\bfx-\bfc|\quad&\text{if } \bfx\in \mathbb{R}^2\setminus D.
  \end{cases}
\end{equation}

Suppose that the domain $D$ is occupied by a homogeneous material with the dielectric constant $\epsilon_c+i\delta$ ($\delta$ is the dissipation factor) and that the matrix $\mathbb{R}^2\setminus \overline{D}$ has the dielectric constant $\epsilon_m$. We assume that $\epsilon_m=1$. We express the dielectric constant of the entire space as
\begin{equation}\label{def:epsilon:D}
  \epsilon = (\epsilon_c+i\delta)\chi_D +\chi_{\mathbb{R}^2\setminus\overline{D}},
\end{equation}
where $\chi_A$ means the characteristic function of a set $A$. We now consider the potential problem
\begin{equation}\label{eqn:u_delta}
  \begin{cases}
  \nabla\cdot\epsilon\nabla u_\delta = f &\text{in }\mathbb{R}^2,\\
  u_\delta(\bfz) = O(|\bfz|^{-1}) &\text{as }|\bfz|\rightarrow \infty,
  \end{cases}
\end{equation}
where $f$ is a source function that is compactly supported in $\mathbb{R}^2\setminus\overline{D}$ satisfying $\int_{\RR^2} f d\bfz=0$. An example of the source function is a polarized dipole $f(\bfz)=\bfp\cdot \nabla \delta_\bfq(\bfz)$, where $\delta_\bfq$ is the Dirac mass at $\bfq$, and $\bfp$, $\bfq$ are constant vectors. 
The solution $u_\delta$ can be expressed as
\begin{equation}\notag%\label{eqn:solutionrepresentationintro}
  u_\delta = F+ \SingleD[\varphi_\delta]\quad\mbox{in }\RR^2,
\end{equation}
where $F$ denotes the Newtonian potential of $f$, i.e.,
$F(\bfz)=\int_{\RR^2}\Gamma(\bfz-\bfy)f(\bfy)\,d\bfy,$
and the density function $\varphi_\delta$ satisfies
\begin{equation}\label{eqn:intgralequationintro}
  (\lambda_\delta I -\KstarD)[\varphi_\delta] = \partial_\nu F\quad\text{on }\partial D
\end{equation}
with $\lambda_\delta$ given by \begin{equation}\label{eqn:lambda}
  \lambda_\delta=\frac{\epsilon_c+1+i\delta}{2(\epsilon_c-1)+2i\delta}=\frac{\epsilon_c +1}{2(\epsilon_c -1)}+O(\delta).
\end{equation}
The plasmon resonance
\begin{equation}\label{eqn:resonanceconditionintro}
  \big\lVert\nabla u_\delta \big\rVert_{L^2(D)} \rightarrow \infty \quad\text{as } \delta \rightarrow 0,
\end{equation}
may occur depending on $\epsilon_c$ and the spectrum of $\KstarD$.
The blow-up rate (or, the resonance rate) in \eqnref{eqn:resonanceconditionintro} is essentially related to the blow-up feature of the norm of the density function $\varphi_\delta$ (see, for instance, \cite[Section 5]{Kang:2017:SRN} and \eqnref{S:varphi:upper} in Section \ref{subsection:plasmonicresonance}). 

One can classify the spectrum of the NP operator on a Lipschitz domain by the resonance rate \cite{Helsing:2017:CSN}.
For $g\in \mathcal{H}^*(\p D)$, let $\varphi_{t,\delta}$ be the solution to 
\begin{equation*}
  ((t+i\delta)I-\mathcal{K}^*_{\p D})[\varphi_{t,\delta}]=g\quad\mbox{on }\p D
\end{equation*}
and define an indicator function 
\begin{equation*}
  \alpha_g(t):=\sup\{\alpha \mid \limsup_{\delta\rightarrow0} \delta^\alpha\lVert\varphi_{t,\delta}\rVert_{\mathcal{H}^*}=\infty\},\quad t\in(-1/2,1/2).
\end{equation*}
Then, $0\leq \alpha_g(t)\leq 1$ for all $t$. \begin{theorem}[\cite{Helsing:2017:CSN}]
Let $g\in\mathcal{H}^*$. For $t\in(-1/2,1/2)$, the following holds. 
\begin{enumerate}[(a), left=0.5em]
  \item If $\alpha_g(t)>0$, then $t\in \sigma(\mathcal{K}^*_{\p D})$.
  \item If $\alpha_g(t)=1$ and $t$ is isolated, then $t\in \sigma_{pp}(\mathcal{K}^*_{\p D})$.
  \item If $\frac{1}{2}\leq \alpha_g(t)<1$, then $t\in \sigma(\mathcal{K}^*_{\p D})$.
\end{enumerate}
\end{theorem}

\section{Layer potential operators on a crescent-shaped domain}\label{section:crescentdomain}

We consider a crescent-shaped domain $\Omega$ that is the region enclosed by the boundaries of two touching disks $B_R$ and $B_r$ such that $B_r\subset B_R$. In other words,
$\Omega = B_R\setminus\overline {B_r}$. The boundary of $\Om$ is composed of two circles $\partial B_R$ and $\partial B_r$ that are tangent at the origin point; see the left figure in Figure \ref{fig:domains}.  As $\Om$ has a cusp on its boundary, one cannot apply the results of the layer potential operators of Lipschitz domains. Instead, we will generalize the concepts of the single-layer potential and the NP operator to the crescent-shaped domain by using a M\"{o}bius transformation. We will then derive the integral expressions of the layer potential operators via the Fourier transform on $\RR$. 
\begin{figure}[b!]
\centering
\subfloat[$z$-plane, $z=z_1+iz_2$]{\includegraphics[height=4.5cm, width=5.72cm, trim={3.6cm 2cm 2cm 2cm}, clip]{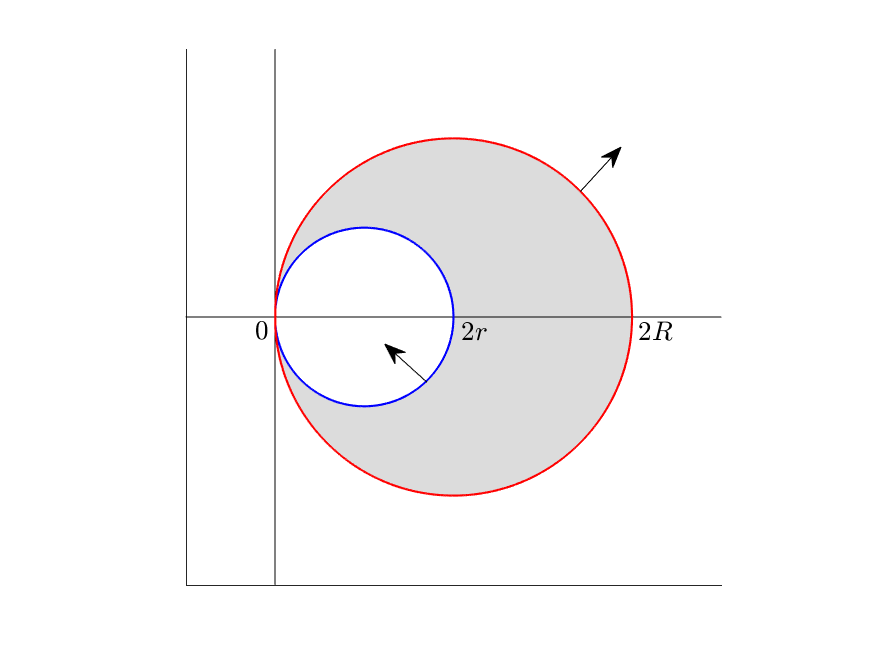}}
\qquad
\subfloat[$w$-plane, $w=x+iy$]{\includegraphics[height=4.5cm, width=5.72cm, trim={3.6cm 2cm 2cm 2cm}, clip]{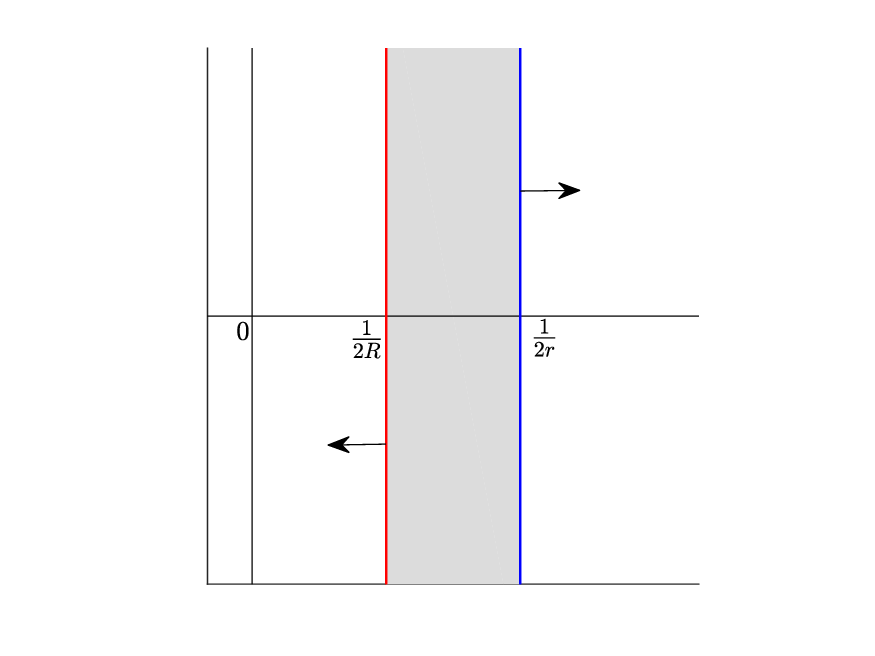}}
\caption{A crescent-shaped domain $\Omega=B_R\setminus\overline B_r$ (gray region in the left figure) and the vertical strip $S=\Psi(\Omega)$ (gray region in the right figure). Arrows indicate the outward normal vectors to $\p \Om$ or $\p S$.}
\label{fig:domains}
\end{figure}

\subsection{M\"{o}bius transformation}
We identify $\bfz=(z_1,z_2)\in\RR^2$ with $z=z_1+iz_2\in\CC$. 
Let $\Psi:\mathbb{C}\setminus{\{0\}}\rightarrow \mathbb{C}\setminus\{0\}$ be the M\"{o}bius transformation, that is,
\begin{equation}\label{eqn:Mobius}
  w=\Psi(z) = \frac{1}{z}.
\end{equation}
Obviously, $\Psi$ is a conformal mapping.
%We identify $(x,y)\in\RR^2$ with $x+iy\in\CC$ and set $w = x + i y$. 
We set $w=x+iy$, $(x,y)\in\RR^2$. Note that $\Psi^{-1}(w)=\Psi(w)=\frac{1}{w}$. The scale factors of the mapping $w\longmapsto \Psi^{-1}(w)$ with respect to $x$ and $y$ coincide. We denote the scale factor by 
\begin{equation*}
   h(x,y)=|\Psi'(w)|=\frac{1}{x^2+y^2}.
\end{equation*}

The M\"{o}bius transformation $\Psi$ maps the left half-plane to the left half-plane, and maps the right half-plane to the right half-plane. In particular, $\Psi$ maps a disk of radius $|a|$ centered at $(a,0)$ to the half-plane determined by $x> \frac{1}{2a}$ if $a>0$, and to the half plane determined by $x<\frac{1}{2a}$ if $a<0$.
We set for $a\neq0\in\RR$ that 
\begin{align}\label{def:h_a}
B_a=\big\{(x,y)\in\RR^2\mid|x+iy-a|\leq |a|\big\}, \quad
  h_a(y)=h\Big(\frac{1}{2a},y\Big)=\frac{1}{\big(\frac{1}{2a}\big)^2+y^2}.
\end{align}
The disks $B_R$ and $B_r$ are defined in this sense. The crescent-shaped domain $\Omega=B_R\setminus\overline{B_r}$ is mapped via the M\"{o}bius transformation onto the vertical strip (see Figure \ref{fig:domains})
\begin{equation*}
 S:= \Psi(\Om) = \Big(\frac{1}{2R},\frac{1}{2r}\Big)\times
  (-\infty,\infty),
\end{equation*}
and vice versa. 
For later use, we denote by $q$ the width of $S$, i.e., 
\beq\label{def:q}
q = \frac{1}{2r}-\frac{1}{2R}>0.
\eeq

We denote by $\nu$ the outward unit normal vector to $\p\Omega$, except at the touching point of $\p B_R$ and $\p B_r$. Note that $\nu$ is directed toward the exterior of $B_R$ on $\partial B_R$, but toward the interior of $B_r$ on $\partial B_r$; see  Figure \ref{fig:domains}. Following our normal vector convention, we then define the normal derivative of a function $v$: at $z\in\p\Om$ with $\Psi(z)=x+iy$,
\beq \label{normal_deriv}
 \begin{aligned}
  \frac{\partial v}{\partial\nu}&=-\frac{1}{h_R(y)}\frac{\partial(u\circ\Psi)}{\partial x}\quad\text{for } z\in\partial B_R\setminus\{0\},\\
  \frac{\partial v}{\partial\nu}&=\frac{1}{h_r(y)}\frac{\partial(u\circ\Psi)}{\partial x}\quad\text{for }z\in\partial R_r\setminus\{0\}.
\end{aligned}
\eeq

Recall that the value of an integrand function at a set of measure zero doesn't affect the integral value. 
We disregard the origin point, where two normal vectors are defined, in the layer potential formulation on the crescent-shaped domain $\Om$ in the following subsection. 

\subsection{Generalization of the layer potential operators to the crescent-shaped domain}\label{subsection:singleandNP}
A density function $\varphi\in\LtwobdOmega$ can be decomposed as
\begin{align}
   \varphi &= \varphi\,\chi_{\p B_R}+\varphi\,\chi_{\partial B_r} \notag \\ \label{def:varphiR_r}
&=:\varphi_R +\varphi_r.
\end{align}
The normal vector $\nu$ of $\p\Om$ points toward the exterior of $B_R$ on $\p B_R$ and toward the interior of $B_r$ on $\p B_r$, as mentioned before.
We define $\Kcal^*_{\p B_r}$ and $\Kcal^*_{\p B_R}$ by the integral expression \eqnref{kstar:def} with this normal vector convention.
We now define the single-layer potential and the NP operator on the crescent-shaped domain as follows. 
\begin{definition}
For $\varphi=\varphi_R+\varphi_r\in L^2(\p\Om)$, we define 
\begin{align}\label{def:SignleOmegadef}
  \SingleOmega[\varphi]
   :=\SingleBR[\varphi_R]
   + \SingleBr[\varphi_r]\quad\mbox{on }\RR^2
   \end{align}
and
\beq\label{def:NPoperatorcrescent1}
\begin{aligned}
  \KstarOmega[\varphi]
  :=&\Big(\KstarBR[\varphi_R] + \frac{\partial}{\partial\nu}\SingleBr[\varphi_r]\Big|_{\partial B_R}\Big)\chipBR\\
 &+\Big(\frac{\partial}{\partial\nu}\SingleBR[\varphi_R]\Big|_{\partial B_r} + \KstarBr[\varphi_r]\Big)\chi_{\partial B_r} \quad\text{on } \partial\Omega.
%  &=\KstarOmega[\psi]\chipBR +\KstarOmega[\psi]\chi_{\partial B_r} \quad a.e.,\quad\text{ on } \partial\Omega,\nonumber
\end{aligned}
\eeq
\end{definition}

%Then, $\SingleOmega$ satisfies the same jump relation as \eqnref{eqn:Kstarjump} as follows. 
\begin{lemma}\label{SK:relations} We have
\begin{align*}
\SingleOmega[\psi]\Big|^{+}(\bfx)&=\SingleOmega[\psi]\Big|^{-}(\bfx)\quad\mbox{a.e. }\bfx\in\partial \Omega,\\
  \frac{\partial}{\partial\nu}\SingleOmega[\varphi]\Big|^\pm_{\p\Om}(\bfx)
  &=
  \Big(\pm\frac{1}{2}I+\KstarOmega \Big)[\varphi](\bfx)\quad \mbox{a.e. } \bfx\in\partial\Omega.
\end{align*}
\end{lemma}
\begin{proof}
The continuity of the single-layer potential $\Scal_{\p\Om}[\varphi]$ across $\p\Om$ directly follows from the continuity of $\Scal_{\p B_R}[\varphi_R]$ and $\Scal_{\p B_r}[\varphi_r]$. 

On the boundary circles $\partial B_R$ and $\partial B_r$, we can apply the results of the layer potential operators on Lipschitz domains that are described in Subsection \ref{subsection:Lipschitzver}. Applying the jump relation \eqnref{eqn:Kstarjump}, we have
\begin{align*}
\frac{\partial}{\partial\nu}\SingleBR[\varphi_R]\Big|^\pm_{\partial B_R} 
&=\Big(\pm\frac{1}{2}I+\KstarBR\Big)[\varphi_R]\quad\mbox{on }\p B_R,\\
\frac{\partial}{\partial\nu}\SingleBr[\varphi_r]\Big|^\pm_{\partial B_r} 
&=\Big(\pm\frac{1}{2}I+\KstarBr\Big)[\varphi_r]\quad\mbox{on }\p B_r,
\end{align*}
where $\nu$ is the outward normal vector to $\p\Om$, the interior and exterior limits are defined corresponding to the direction of $\nu$, and $\Kcal^*_{\p B_r}$ and $\Kcal^*_{\p B_R}$ are also defined with this normal vector convention.
Also, we have
\begin{equation}\notag%\label{eqn:SBrBRnu}
  \frac{\partial}{\partial\nu}\SingleBr[\varphi_r]\Big|^\pm_{\partial B_R}
  =
  \begin{dcases}
  \frac{\partial}{\partial\nu}\SingleBr[\varphi_r]\Big|_{\partial B_R} \quad&\mbox{on } \p B_R\setminus \{0\},\\
   \Big(\pm\frac{1}{2}I+\KstarBr\Big)[\varphi_r]\quad&\text{at } 0
  \end{dcases}
\end{equation}
and
\begin{equation}\notag%\label{eqn:SBRBrnu}
  \frac{\partial}{\partial\nu}\SingleBR[\varphi_R]\Big|^\pm_{\partial B_r} 
  =
  \begin{dcases}
    \frac{\partial}{\partial\nu}\SingleBR[\varphi_R]\Big|_{\partial B_r} \quad&\text{on } \p B_r\setminus \{0\},\\
       \Big(\pm\frac{1}{2}I+\KstarBR\Big)[\varphi_R]\quad&\text{at } 0.
  \end{dcases}
\end{equation}
Hence, we prove the lemma.
\end{proof}
We emphasize that it is necessary to analyze the mapping properties of $\frac{\partial}{\partial\nu}\SingleBr[\varphi_r]\big|_{\partial B_R}$ and $\frac{\partial}{\partial\nu}\SingleBR[\varphi_R]\big|_{\partial B_r}$ to understand the spectral structure of the NP operator on the crescent-shaped domain.
% \smallskip

As in the previous subsection, we set 
\beq\label{z:y}
z=\frac{1}{x+iy}\quad\mbox{for }z\neq 0.
\eeq
Let $z_t=\Psi^{-1}({\frac{1}{2r}+it})$ on $\p B_r$ and $z_t=\Psi^{-1}(\frac{1}{2R}+it)$ on $\p B_R$; then we have
$|z-z_t|=\frac{| x-\frac{1}{2r} +i(y-t)|}{|x+iy||\frac{1}{2r}+it|}$ on $\p B_r$ and a similar relation on $\p B_R$, respectively. We identify $\varphi_R$, $\varphi_r$ in \eqnref{def:varphiR_r} with the functions on $\RR$ given by
\beq \label{varphi:tilde}
\begin{aligned}
\widetilde{\varphi}_R(y)&=(\varphi_R\circ\Psi^{-1})\Big(\frac{1}{2R}+iy\Big),\\
\widetilde{\varphi}_r(y)&=(\varphi_r\circ\Psi^{-1})\Big(\frac{1}{2r}+iy\Big) \quad\mbox{for }y\in\RR.
\end{aligned}
\eeq
Then, the single-layer potential \eqnref{def:SignleOmegadef} satisfies
\begin{equation}\label{eqn:singleOmegaLtwo}
\begin{aligned}
  \SingleOmega[\varphi](z) 
 &=\frac{1}{2\pi}\int_{\partial B_r}\ln|z-z_t|\varphi_r(z_t)\, d\sigma (z_t)+\frac{1}{2\pi}\int_{\partial B_R}\ln|z-z_t|\varphi_R(z_t)\, d\sigma (z_t)\\
   &=\frac{1}{4\pi}\int_{-\infty}^\infty 
  \bigg(\ln\Big[\Big(x-\frac{1}{2r}\Big)^2+(y-t)^2\Big]-\ln\Big[\Big(\frac{1}{2r}\Big)^2+t^2\Big]\bigg)\tvarphi_r(t)h_r(t)\, dt\\
 &\hskip .2cm +\frac{1}{4\pi}\int_{-\infty}^\infty 
  \bigg(\ln\Big[\Big(x-\frac{1}{2R}\Big)^2+(y-t)^2\Big]-\ln\Big[\Big(\frac{1}{2R}\Big)^2+t^2\Big]\bigg)\tvarphi_R(t) h_R(t)\, dt\\
 &\hskip .2cm -\frac{1}{4\pi}\ln(x^2+y^2)\int_{\partial\Omega}\varphi(z)\, d\sigma(z)
\end{aligned}
\end{equation}
and
\begin{align*}
\frac{\partial}{\partial\nu}\SingleBr[\varphi_r]\Big|_{\partial B_R}(z)
	&=\frac{1}{2\pi}\int_{\partial B_r}\frac{\partial}{\partial\nu_z}\ln|z-z_t|\varphi_r(z_t) \, d\sigma(z_t)\nonumber\\
	&=\frac{1}{2\pi}\frac{1}{ h_R(y)}
      \int_{-\infty}^\infty 
      \bigg(\frac{q}{q^2+(y-t)^2}+\frac{\frac 1 {2R}}{(\frac 1 {2R})^2+y^2}\bigg)\tvarphi_r(t) h_r(t)\, dt\nonumber\\
   &=\frac{1}{2\pi}\frac{1}{h_R(y)}
      \int_{-\infty}^\infty \frac{q}{q^2+(y-t)^2}\, \tvarphi_r(t)h_r(t)\, dt+\frac{1}{4\pi R}\int_{-\infty}^\infty\tvarphi_r(t)h_r(t)\, dt,\\
%      \end{align*}
%      and
%      \begin{align*}
\frac{\partial}{\partial\nu}\SingleBR[\varphi_R]\Big|_{\partial B_r}(z)
    &=\frac{1}{2\pi}\frac{1}{ h_r(y)}
      \int_{-\infty}^\infty \frac{q}{q^2+(y-t)^2} \,\tvarphi_R(t)h_R(t)\, dt-\frac{1}{4\pi r}\int_{-\infty}^\infty\tvarphi_R(t)h_R(t)\, dt,
\end{align*}
where $q$ denotes the width of the strip $\Psi(\Om)$ (that is, $q=\frac{1}{2r}-\frac{1}{2R}$).
Furthermore, it holds from \eqnref{NP:Disk} that
 \begin{align}\notag
  \KstarBR[\varphi_R]&=\frac{1}{4\pi R}\int_{\partial B_R}\varphi_R \, d\sigma\quad\mbox{on }\p B_R,\\\notag
  \KstarBr[\varphi_r]&=-\frac{1}{4\pi r}\int_{\partial B_r}\varphi_r \, d\sigma\quad\mbox{on }\p B_r.
\end{align}
Hence, for $\varphi=\varphi_R+\varphi_r\in L^2(\p\Om)$, it holds that
%\begin{lemma} For $\varphi$ given by \eqnref{varphi:stack}, it holds that
\beq\label{eqn:Kstarbasicresult}
 \KstarOmega[\varphi](z)
  =
  \begin{dcases}
\frac{1}{2\pi h_R(y)}
      \int_{-\infty}^\infty \frac{q}{q^2+(y-t)^2} \, \tvarphi_r(t)\, h_r(t)\, dt +\frac{1}{4\pi R}\int_{\partial\Omega}\varphi\, d\sigma
    \quad&\mbox{for }x=\frac{1}{2R},\\
\frac{1}{2\pi h_r(y)}
      \int_{-\infty}^\infty \frac{q}{q^2+(y-t)^2}\, \tvarphi_R(t)\,h_R(t)\, dt-\frac{1}{4\pi r}\int_{\partial\Omega}\varphi \, d\sigma \quad&\mbox{for }x=\frac{1}{2r}
\end{dcases}
\eeq
with $\tvarphi_R$ and $\tvarphi_r$ given by \eqnref{varphi:tilde}.
 
%\end{lemma}
\subsection{Layer potential operators in terms of the Fourier transform}\label{subsec:layer:Fourier}
%\subsection{Layer potential operators in terms of the Fourier transform}
The Fourier transform and its inversion in $\RR$ are defined as
\begin{align*}
  \mathcal{F}[f](k)&= \frac{1}{\sqrt{2\pi}}\int_{-\infty}^\infty f(y)e^{-iky}\, dy,\\
  \mathcal{F}^{-1}[f](k)&= \frac{1}{\sqrt{2\pi}}\int_{-\infty}^\infty f(y)e^{iky}\, dy.
\end{align*}
Recall that we identify $\varphi\in L^2(\p\Om)$ with two functions $\widetilde \varphi_R$, $\widetilde \varphi_r$ given by \eqnref{varphi:tilde}. We can further identify $\varphi$, via the Fourier transform, with
\begin{align}\label{def:U}
  U[\varphi]
&:=
  \begin{bmatrix}
    \mathcal{F}[h_R \widetilde{\varphi}_R]\\[2mm]
    \mathcal{F}[h_r \widetilde{\varphi}_r]
   \end{bmatrix}.
   \end{align}
The inversion of the operator $U$ is
\begin{align*}
   U^{-1}\begin{bmatrix}
     {f}_R\\[2mm]
     {f}_r
   \end{bmatrix}(z)
   =
   \begin{dcases}
    \frac{1}{h_R(y)}\mathcal{F}^{-1}[{f}_R](y)\quad&\mbox{for }z\in\p B_R,\\
    \frac{1}{h_r(y)}\mathcal{F}^{-1}[{f}_r](y)\quad&\mbox{for }z\in\p B_r,
   \end{dcases}
\end{align*}
where $z$ and $y$ satisfy the relation \eqnref{z:y}. 

We now express the single-layer potential and the NP operator for $\varphi$ in terms of $U[\varphi]$ as follows. 
\begin{lemma}\label{lemma:SinglelayeronclassM}
Let $\varphi = \varphi_R+\varphi_r\in \LtwozerobdOmega$. For $z=\Psi^{-1}(x+iy)\in\p\Om$, we have
\beq\notag
 \begin{aligned}
   \SingleOmega[\varphi](z)
   = -\frac{1}{\sqrt{2\pi}}\int_{-\infty}^\infty\frac{1}{2|k|}
  \Big(e^{-|x-\frac{1}{2R}||k|}\mathcal{F}[h_R\widetilde{\varphi}_R](k)
   +e^{-|x-\frac{1}{2r}||k|}\mathcal{F}[h_r\widetilde{\varphi}_r](k) \Big) e^{iky}\, dk+C,  \end{aligned}
\eeq
where $C$ is the constant given by 
\begin{equation*}
  C=\frac{1}{\sqrt{2\pi}}\int_{-\infty}^\infty\frac{1}{2|k|}
    \Big(e^{-\frac{1}{2R}|k|}\mathcal{F}[h_R\widetilde{\varphi}_R](k)
     +e^{-\frac{1}{2r}|k|}\mathcal{F}[h_r\widetilde{\varphi}_r](k) \Big) dk .
\end{equation*}
% $C$ is a constant such that $\Scal_{\p\Om}[\varphi](z)\rightarrow0$ as $|z|\rightarrow\infty$ (i.e., as $|x+iy|\rightarrow 0$).
\end{lemma}
\begin{proof}
The assumption $\varphi\in\LtwozerobdOmega$ implies that $h_R|\widetilde{\varphi}_R|^2,\, h_r|\widetilde{\varphi}_r|^2 \in L^1{(\mathbb{R})}$.
Since $h_R$ and $h_r$ are bounded and integrable on $\RR$, we have
\beq\label{hrvarphi:L1L2}
h_R\widetilde{\varphi}_R ,h_r\widetilde{\varphi}_r \in L^1{(\mathbb{R})}\cap L^2{(\mathbb{R})}.
\eeq

Note that for fixed $a,b,A,B$, the function $\ln(A^2+(a-t)^2)-\ln(B^2+(b-t)^2)$ is square integrable on any bounded interval of $t$. Furthermore, we have 
\beq\label{eqn:logdecayatinfinity}
\begin{aligned}
&\ln\big(A^2+(a-t)^2\big)-\ln\big(B^2+(b-t)^2\big)\\
  =\, &(-2 a + 2 b)\frac{1}{t} + (-a^2 + A^2 + b^2 - B^2)\frac{1}{t^2}+O\Big(\frac{1}{t^3}\Big)\quad\text{as } t\rightarrow \infty,
\end{aligned}
\eeq
where $O(\frac{1}{t^3})$ is uniformly bounded with respect to small $|b|\geq 0$ (with fixed $a,A,B$). 
Applying the dominated convergence theorem to \eqnref{eqn:singleOmegaLtwo}, we obtain
\begin{align*}
  &\SingleOmega[\varphi](\Psi^{-1}(x+iy)) \\
  =&\lim_{c\rightarrow y}
  \Bigg[\frac{1}{4\pi}\int_{-\infty}^\infty 
  \bigg(\ln\Big[\Big(x-\frac{1}{2R}\Big)^2+(y-t)^2\Big]-\ln\Big[\Big(\frac{1}{2R}\Big)^2+(t-y+c)^2\Big]\bigg)\,\widetilde \varphi_R(t)h_R(t)\, dt\\
  &\quad +\frac{1}{4\pi}\int_{-\infty}^\infty 
  \bigg(\ln\Big[\Big(x-\frac{1}{2r}\Big)^2+(y-t)^2\Big]-\ln\Big[\Big(\frac{1}{2r}\Big)^2+(t-y+c)^2\Big]\bigg)\,\widetilde \varphi_r(t)h_r(t)\, dt\Bigg].
\end{align*}
The last term in \eqnref{eqn:singleOmegaLtwo} vanishes assuming the mean-zero condition on $\varphi$. 
The Fourier transform of $\ln(y^2+a^2)$ is 
\beq\label{fourier:ln}
\mathcal{F}[\ln(y^2+a^2)](k)=-\sqrt{2\pi}\bigg(\frac{e^{-|a||k|}}{|k|}+2\gamma_E\delta(k)\bigg),
\eeq
where $\frac{1}{|k|}$ is defined in the sense of principal value and $\gamma_E$ denotes Euler's constant.
The convolution theorem of the Fourier transform, i.e., $\mathcal{F}[f_1*f_2]=\sqrt{2\pi}\mathcal{F}[f_1]\,\mathcal{F}[f_2]$, leads to the relation
\beq\label{Scal:step1}
\begin{aligned}
  \SingleOmega[\varphi](z)
  =&\lim_{c\rightarrow y}\frac{-1}{\sqrt{2\pi}}\Bigg[\,
    \int_{-\infty}^\infty\frac{1}{2|k|}\Big(e^{-|x-\frac{1}{2R}||k|}-e^{-\frac{|k|}{2R}}e^{-ikc}\Big)\mathcal{F}[h_R \widetilde\varphi_R](k) e^{iky}\, dk\\
   &\quad\quad \quad+\int_{-\infty}^\infty\frac{1}{2|k|}\Big(e^{-|x-\frac{1}{2r}||k|}-e^{-\frac{|k|}{2r}}e^{-ikc}\Big)\mathcal{F}[h_r \widetilde \varphi_r](k)e^{iky}\, dk\Bigg].
\end{aligned}
\eeq

From the mean-zero assumption on $\varphi$, we have
\beq\label{varphi:meanzero}
\int_{-\infty}^\infty\widetilde{\varphi}_Rh_R\, dy +\int_{-\infty}^\infty\widetilde{\varphi}_rh_r\, dy =0
\eeq
and, hence,
\begin{align*}
  &\mathcal{F}[h_R\widetilde{\varphi}_R](k)+\mathcal{F}[h_r\widetilde{\varphi}_r](k)\\
  =&\frac{1}{\sqrt{2\pi}}\int_{-\infty}^\infty(\widetilde{\varphi}_R h_R)(y)\big(e^{-iky}-1\big)dy
  +\frac{1}{\sqrt{2\pi}}\int_{-\infty}^\infty(\widetilde{\varphi}_r h_r)(y)\big(e^{-iky}-1\big)dy.
% =\, O(|k|^{\frac{1}{4}})\quad\text{as }|k|\rightarrow 0.
\end{align*}
From \eqnref{hrvarphi:L1L2} and the fact that $h_R$ is uniformly bounded, we have
\begin{align*}
\Big|\int_{-\infty}^\infty(\widetilde{\varphi}_R h_R)(y)\big(e^{-iky}-1\big)dy\Big|
\leq &\lVert\tvarphi_R {(h_R)}^{\frac 1 2}\rVert_{L^2(\RR)} \lVert{(h_R)}^{\frac 1 2}\big(e^{-iky}-1\big)\rVert_{L^2(\RR)}\\
\leq&\, C (\int_{|y|<\frac{1}{\sqrt{|k|}}}\frac{\big|e^{-iky}-1\big|^2}{\frac{1}{4R^2}+y^2} dy+\int_{|y|\geq\frac{1}{\sqrt{|k|}}}\frac{\big|e^{-iky}-1\big|^2}{\frac{1}{4R^2}+y^2}dy)^{\frac{1}{2}}\\
\leq &\, C(\int_{|y|<\frac{1}{\sqrt{|k|}}} \frac{|ky|^2}{\frac{1}{4R^2}} dy
+\int_{|y|\geq\frac{1}{\sqrt{|k|}}}\frac{1}{y^2}dy)^{\frac 1 2}\\
\leq &\, C|k|^{\frac{1}{4}}
\end{align*}
for some positive constant $C$, and a similar relation holds for $\tvarphi_r h_r$. Thus, for any constants $a$ and $b$, it holds that
\begin{equation}\label{Fourier:L2_0}
e^{-|a||k|}\mathcal{F}[h_R\widetilde{\varphi}_R](k)+e^{-|b||k|}\mathcal{F}[h_r\widetilde{\varphi}_r](k)=O(|k|^{\frac 1 4})  \quad\text{as }|k|\rightarrow 0.
\end{equation}
Then, we can apply the dominated convergence theorem to \eqnref{Scal:step1} and, as a result, change the order of the limit and integration. 
%Applying again the dominate convergence theorem to \eqnref{Scal:step1}, we obtain 
%\beq\notag
% \begin{aligned}
%   \SingleOmega[\varphi](z)
%   =&-\frac{1}{\sqrt{2\pi}}\int_{-\infty}^\infty\frac{1}{2|k|}(e^{-|x-\frac{1}{2R}||k|}-e^{-\frac{|k|}{2R}}e^{-iky})\mathcal{F}[h_R\widetilde{\varphi}_R](k)\, e^{iky}\, dk\\
%   &-\frac{1}{\sqrt{2\pi}}\int_{-\infty}^\infty\frac{1}{2|k|}(e^{-|x-\frac{1}{2r}||k|}-e^{-\frac{|k|}{2r}}e^{-iky})\mathcal{F}[h_r\widetilde{\varphi}_r](k)\, e^{iky}\, dk.
% \end{aligned}
% \eeq
Hence, we prove the lemma.
\end{proof}

Recall that $q=\frac{1}{2r} -\frac{1}{2R}>0$.
We set
\begin{equation}\label{P:def}
  P:=\frac{1}{\sqrt{2}}\begin{bmatrix}
    -1&1\\1&1
  \end{bmatrix}.
\end{equation}
This matrix satisfies $P=P^{-1}=P^T$ and, for any $d_1,d_2$, 
\beq\label{p:prop}
\frac{1}{2}
\begin{bmatrix}d_1+d_2&-d_1+d_2\\
-d_1+d_2&d_1+d_2
\end{bmatrix}
=P^{-1}
\begin{bmatrix}
d_1&0\\0&d_2
\end{bmatrix}
P.
\eeq

\begin{lemma}\label{lemma:KStarfreqency}
For $\varphi=\varphi_R+\varphi_r\in \LtwozerobdOmega$, it holds that
\begin{equation}\label{eqn:KStarfreqencydomain}
U[\KstarOmega[\varphi]]
   = P^{-1}\Big(\frac{1}{2}\, e^{-q|k|}
   \begin{bmatrix}
    -1&0\\0&1
   \end{bmatrix}
   P\Big)U[\varphi].
\end{equation} 
\end{lemma}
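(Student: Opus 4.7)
The plan is to start from the explicit formula \eqref{eqn:Kstarbasicresult} for $\KstarOmega[\varphi]$, exploit the mean-zero hypothesis to kill the constant terms, recognize what is left as convolution with a Poisson kernel, and then use the Fourier convolution theorem. Finally I diagonalize the resulting off-diagonal $2\times 2$ operator by conjugating with $P$.

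First, since $\varphi\in \LtwozerobdOmega$, we have $\int_{\partial\Omega}\varphi\,d\sigma=0$, so the two constant contributions $\frac{1}{4\pi R}\int_{\partial\Omega}\varphi\,d\sigma$ and $-\frac{1}{4\pi r}\int_{\partial\Omega}\varphi\,d\sigma$ in \eqref{eqn:Kstarbasicresult} vanish. What remains is
\begin{align*}
  h_R(y)\,(\KstarOmega[\varphi])_R(y) &= \frac{1}{2\pi}\int_{-\infty}^\infty \frac{q}{q^2+(y-t)^2}\,(\varphi_r h_r)(t)\,dt,\\
  h_r(y)\,(\KstarOmega[\varphi])_r(y) &= \frac{1}{2\pi}\int_{-\infty}^\infty \frac{q}{q^2+(y-t)^2}\,(\varphi_R h_R)(t)\,dt,
\end{align*}
so $h_R (\KstarOmega[\varphi])_R = \frac{1}{2\pi}P_q\ast(\varphi_r h_r)$ and $h_r (\KstarOmega[\varphi])_r = \frac{1}{2\pi}P_q\ast(\varphi_R h_R)$, where $P_q(y)=\frac{q}{q^2+y^2}$ is the Poisson kernel on the upper half plane with height $q>0$.

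Next I apply the Fourier transform. Under the normalization used in the paper, the standard Poisson kernel computation gives $\mathcal F(P_q)(k)=\sqrt{\pi/2}\,e^{-q|k|}$, and the convolution theorem reads $\mathcal F(f\ast g)=\sqrt{2\pi}\,\mathcal F(f)\mathcal F(g)$. Combining these,
\begin{equation*}
  \mathcal F\Bigl(\tfrac{1}{2\pi}P_q\ast g\Bigr)(k) = \tfrac{1}{2}e^{-q|k|}\mathcal F(g)(k).
\end{equation*}
Applying this componentwise to the two identities above, the definition of $U$ yields
\begin{equation*}
  U\KstarOmega[\varphi](k) = \frac{1}{2}e^{-|k|q}
  \begin{bmatrix}0 & 1\\ 1 & 0\end{bmatrix}(U\varphi)(k).
\end{equation*}
The only subtle point in this step is carefully absorbing the factor $1/h_R$ (resp.\ $1/h_r$) coming from the $U^{-1}$ side into the first (resp.\ second) component before taking the Fourier transform; Lemma \ref{lemma:SinglelayeronclassM} (or rather the integrability observation in its proof, using \eqref{eqn:logdecayatinfinity} and the mean-zero condition) ensures that $\varphi_R h_R,\varphi_r h_r\in L^1\cap L^2$, so the Fourier transform and the convolution theorem apply literally.

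Finally, a direct $2\times 2$ matrix computation shows that
\begin{equation*}
  P\begin{bmatrix}0&1\\1&0\end{bmatrix}P^{-1}=\begin{bmatrix}-1&0\\0&1\end{bmatrix},
\end{equation*}
since $P$ is (up to sign) precisely the matrix of eigenvectors $\frac{1}{\sqrt 2}(-1,1)^T$, $\frac{1}{\sqrt 2}(1,1)^T$ of $\begin{bmatrix}0&1\\1&0\end{bmatrix}$ with eigenvalues $-1,1$. Multiplying the displayed identity for $U\KstarOmega[\varphi]$ on the left by $P$ and inserting $P^{-1}P=I$ between the matrix and $U\varphi$ gives exactly \eqref{eqn:KStarfreqencydomain}. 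I expect no real obstacle beyond bookkeeping of the $\sqrt{2\pi}$ factors in the Fourier convention; the mean-zero hypothesis is what makes the scheme work, by eliminating the rank-one pieces that would otherwise spoil the diagonalization.
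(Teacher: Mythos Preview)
Your proof is correct and follows essentially the same approach as the paper: use the mean-zero hypothesis to drop the constant terms in \eqref{eqn:Kstarbasicresult}, apply the convolution theorem to the Poisson kernel to obtain $U\KstarOmega[\varphi]=\tfrac12 e^{-|k|q}\begin{bmatrix}0&1\\1&0\end{bmatrix}U\varphi$, and then conjugate by $P$ using $P\begin{bmatrix}0&1\\1&0\end{bmatrix}P=\begin{bmatrix}-1&0\\0&1\end{bmatrix}$. The paper's own proof is just a terse two-line version of exactly this; your write-up simply makes the Fourier-constant bookkeeping and the role of the mean-zero condition explicit.
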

\begin{proof} 
%It then follows from \eqnref{eqn:Kstarbasicresult} that
%\beq\notag
% \KstarOmega[\varphi](z)
%  =
%  \begin{dcases}
%\frac{1}{2\pi h_R(y)}
%      \int_{-\infty}^\infty \frac{q}{q^2+(y-t)^2} \, \tvarphi_r(t) h_r(t)\, dt 
%    \quad&\mbox{on }\p B_R,\\
%\frac{1}{2\pi h_r(y)}
%      \int_{-\infty}^\infty \frac{q}{q^2+(y-t)^2}\, \tvarphi_R(t)h_R(t)\, dt
%      \quad&\mbox{on }\p B_r.
%\end{dcases}
%\eeq
It holds that
\begin{equation*}
  \mathcal{F}\bigg[\frac{q}{q^2+y^2}\bigg](k)=\sqrt{\frac{\pi}{2}}\, e^{-q|k|}.
\end{equation*}
From the assumption $\varphi=\varphi_R+\varphi_r\in \LtwozerobdOmega$, we have $\int_{\p\Om}\varphi\, d\sigma=0$. 
It then follows by applying the convolution theorem of the Fourier transform to \eqnref{eqn:Kstarbasicresult} that
%Hence, the convolution theorem of the Fourier transform leads to the relation
\begin{align*}
  U[\KstarOmega[\varphi]]
  = &\frac{1}{2}e^{-q|k|}
  \begin{bmatrix}
    0&1\\1&0
   \end{bmatrix}\begin{bmatrix}
    \mathcal{F}[h_R \widetilde{\varphi}_R]\\
    \mathcal{F}[h_r \widetilde{\varphi}_r]
   \end{bmatrix}  
  = \frac{1}{2}e^{-q|k|}
  \begin{bmatrix}
    0&1\\1&0
   \end{bmatrix}
  U[\varphi].
\end{align*}
From \eqnref{p:prop}, we obtain \eqnref{eqn:KStarfreqencydomain}.
\end{proof}

\begin{lemma}\label{lemma:Ltwozerocalderon}
For $\psi,\varphi\in\LtwozerobdOmega$, we have
  \begin{align}\notag%\label{eqn:calderonequiv}
   \int_{\partial\Omega}\psi(z)\, \overline{\SingleOmega[\varphi](z)}\, d\sigma(z)
   =\int_{-\infty}^\infty 
\big(U[\psi](k)\big)^T P^{-1}\bigg(-
   \frac{1}{2|k|}
   \begin{bmatrix}
     1-e^{-q|k|}&0\\
     0 &1+e^{-q|k|}
   \end{bmatrix}P\bigg)
   \overline{U[\varphi](k)}\, dk.
\end{align}
\end{lemma}
\begin{proof}
Set $\psi = \psi_R+\psi_r \text{ and } \varphi = \varphi_R+\varphi_r$.
From Lemma \ref{lemma:SinglelayeronclassM}, we have
\begin{align*}
  &\int_{\partial\Omega}\psi\, \overline{\SingleOmega[\varphi]}\, d\sigma\\
 =&\int_{-\infty}^\infty \tpsi_R(y)\, \overline{\Scal_{\p\Om}[\varphi]\Big(\frac{1}{1/(2R)+iy}\Big)}\,h_R(y)\, dy
 +\int_{-\infty}^\infty \tpsi_r(y)\,\overline{\Scal_{\p\Om}[\varphi]\Big(\frac{1}{1/(2r)+iy}\Big)}\,h_r(y)\, dy\\
  =&-\frac{1}{\sqrt{2\pi}}\int_{-\infty}^\infty \int_{-\infty}^\infty 
  (\tpsi_Rh_R)(y)\,\frac{1}{2|k|}\Big( \,\overline{\mathcal{F}[h_R\widetilde{\varphi}_R](k)}
  +e^{-q|k|}\,\overline{\mathcal{F}[h_r\widetilde{\varphi}_r](k)}\,\Big)e^{-iky}\,dk\,dy\\
  &-\frac{1}{\sqrt{2\pi}}\int_{-\infty}^\infty \int_{-\infty}^\infty 
(\tpsi_rh_r)(y)\,\frac{1}{2|k|}\Big(e^{-q|k|}\, \overline{\mathcal{F}[h_R\widetilde{\varphi}_R](k)}
  +\overline{\mathcal{F}[h_r\widetilde{\varphi}_r](k)}\,\Big)e^{-iky}\,dk\,dy.
  \end{align*}
  By changing the order of integrations, one obtains
\begin{align*}
  -\int_{\partial\Omega}\psi\,\overline{\SingleOmega[\varphi]}\, d\sigma \notag
  =\int_{-\infty}^\infty\frac{1}{2|k|}\,
  \big(U[\psi](k)\big)^T
   \begin{bmatrix}
     1&e^{-q|k|}\\
     e^{-q|k|} &1
   \end{bmatrix}
  \overline{ U[\varphi](k)}\, dk.
   \end{align*}
In view of \eqnref{p:prop}, this completes the proof. 
\end{proof}

\section{Spectral resolution of the NP operator on a crescent-shaped domain}
We denote the two matrix-valued functions in Lemma \ref{lemma:KStarfreqency} and Lemma \ref{lemma:Ltwozerocalderon} as follows:
%Let $\mathbb{S}$ and $\mathbb{K}$ be the matrix valued functions given by
\beq\label{def:matrix:S:K}
\begin{aligned}
  \mathbb{S}&= -\frac{1}{2|k|}\begin{bmatrix}
      1-e^{-q|k|}&0\\
     0 &1+e^{-q|k|}
   \end{bmatrix}
   ,\quad k\in\mathbb{R}\setminus\{0\},\\[2mm]
%\end{equation*}
%and
%\begin{equation*}
  \mathbb{K}&=\frac{1}{2}e^{-q|k|}
  \begin{bmatrix}
    -1&0\\0&1
   \end{bmatrix},\quad k\in \mathbb{R}.
\end{aligned}
\eeq
In terms of these matrix-valued functions, we define a Hilbert space $K^{-1/2}_0$ that extends $L^2_0(\p\Om)$. We then generalize the layer potential operators to be defined on $K^{-1/2}_0$ by using the integral expressions in Subsection \ref{subsec:layer:Fourier}. We finally obtain the spectral resolution of the NP operator on $K^{-1/2}_0$.

\subsection{Hilbert space $K^{-1/2}_0$}\label{subsection:newspaceNPoperator}

For $\varphi\in L^2_0(\p\Om)$, it holds that $\varphi = U^{-1}P\hvarphi$ with 
\begin{equation*}
   \hvarphi =  \frac{1}{\sqrt{2}}\begin{bmatrix}
      -1&1\\1&1
    \end{bmatrix}\begin{bmatrix}
      \mathcal{F}[h_R \widetilde{\varphi}_R]\\[2mm]
      \mathcal{F}[h_r \widetilde{\varphi}_r]
     \end{bmatrix}.
\end{equation*}
From \eqnref{Fourier:L2_0}, we have 
$ \int_{-\infty}^\infty
   \hvarphi^T
   (-\mathbb{S})~
   \overline{\hvarphi} \, dk<\infty$.
Based on these relations, we define a Hilbert space: 
\begin{definition}\label{def:K_0}
We define 
\beq\label{varphi_K}
K^{-1/2}_0 := \bigg\{
  \varphi=U^{-1}P\hvarphi \,\Big\vert\, \hvarphi=\begin{bmatrix}\widehat{\varphi}_1\\\widehat{\varphi}_2\end{bmatrix}
   \mbox{ satisfying } \int_{-\infty}^\infty
   \hvarphi^T
   (-\mathbb{S})~
   \overline{\hvarphi} \, dk<\infty
   \bigg\},
   \eeq  where $\hvarphi_1$ and $\hvarphi_2$ are measurable functions on $\RR$, and $P$ is given by \eqnref{P:def}. 
\end{definition}

The inverse Fourier transform $U^{-1}$ in \eqnref{varphi_K} is defined in the tempered distribution sense. Indeed, for any function $f$ on $\RR$ satisfying
	\beq\label{cond:f:decay}
	\int_{-\infty}^\infty\frac{1}{1+|k|}\, |f(k)|^2\, dk<\infty,
	\eeq
	it holds that, for any  $\psi$ in the Schwartz class $\mathcal{S}$, 
	\begin{align*}
		\bigg|\int_{-\infty}^\infty f\psi\, dk\bigg|
		&\leq \bigg(\int_{-\infty}^\infty\frac{1}{1+|k|}|f|^2\, dk\bigg)^{1/2} \bigg(\int_{-\infty}^\infty (1+|k|)|\psi|^2\, dk\bigg)^{1/2}\\
		&\leq C\bigg(\int_{-\infty}^\infty (1+|k|)^4|\psi|^2
		\frac{1}{(1+|k|)^2}\, dk\bigg)^{1/2}
		\\
		&\leq C\big\lVert\, (1+|k|)^2\, \psi(k) \big\rVert_{\infty}\\
		&\leq C\sum_{\alpha\leq2}\lVert\psi\rVert_{\alpha,0}<\infty,
	\end{align*}
	where $\lVert\phi\rVert_{\alpha,\beta}:= \sup_{x\in\mathbb{R}}|x^\alpha\partial^\beta\phi(x)|$. 
	Therefore, we have $f\in\mathcal{S}'(\mathbb{R})$, where $\mathcal{S}'(\mathbb{R})$ denotes the class of tempered distributions. 
	The Fourier transform and its inversion on $L^2(\RR)$ can be extended to $\mathcal{S}'(\mathbb{R})$, where the inversion formula still holds. Since $\widehat{\varphi}_1$ and $\widehat{\varphi}_2$ satisfy the decay condition \eqnref{cond:f:decay}, we can define the inverse transform for $P\widehat{\varphi}$.

	As the $(2,2)$-component of $\mathbb{S}$ blows up as $|k|^{-1}$ near $k=0$, the condition $ \int_{-\infty}^\infty
	\hvarphi^T
	(-\mathbb{S})~
	\overline{\hvarphi} \, dk<\infty$ implies decay of $\hvarphi_2$ near $k=0$; we highlight this property by adding the subscript $0$ in $K_0^{-1/2}$. 
	On the other hand, we add the superscript $-{1}/{2}$ in $K_0^{-1/2}$ since we define an inner product in a similar way to \eqnref{def:Hstarnorm} (see \eqnref{def:single} below).

It is straightforward to obtain the following. 
\begin{lemma}\label{lemma:L20:K}
We have
\beq\label{eqn:L20:K}
\LtwozerobdOmega\subset K^{-1/2}_0.
\eeq
\end{lemma}

 Note that $K^{-1/2}_0$ is defined in the weighted $L^2$ sense. 
We accordingly define the inner product so that $K^{-1/2}_0$ is complete. In other words, we equip this space with the inner product 
\begin{align}\label{def:single}
  \langle \psi,\varphi\rangle_{-1/2}:=&
  - \int_{-\infty}^\infty
   \begin{bmatrix}
        \widehat{\psi}_1\\
        \widehat{\psi}_2
      \end{bmatrix}^T
   \mathbb{S}~
   \overline{\begin{bmatrix}
         \widehat{\varphi}_1\\
         \widehat{\varphi}_2
       \end{bmatrix}}\, dk\\\notag
       =&\, \frac{1}{2}\int_{-\infty}^{\infty}\widehat \psi_1(k) \, \overline{\widehat \varphi_1(k)}\, \frac{1-e^{-q|k|}}{|k|}\, dk
       + \frac{1}{2}\int_{-\infty}^{\infty}\widehat \psi_2(k) \, \overline{\widehat \varphi_2(k)}\, \frac{1+e^{-q|k|}}{|k|}\, dk,
\end{align}
where $\psi,\varphi\in K^{-1/2}_0$ are given by 
\beq\label{varphi:hat}
\varphi = U^{-1}P\begin{bmatrix}
     \widehat{\varphi}_1\\\widehat{\varphi}_2
   \end{bmatrix},\quad
\psi = U^{-1}P\begin{bmatrix}
     \widehat{\psi}_1\\\widehat{\psi}_2
   \end{bmatrix}.
      \eeq
We denote the associated norm by 
\begin{equation}\label{def:K:norm}
  \lVert\varphi\rVert_{-1/2}:=(\langle\varphi,\varphi \rangle_{-1/2} )^{\frac{1}{2}}
  =
  \bigg(- \int_{-\infty}^\infty
   \begin{bmatrix}
        \widehat{\varphi}_1\\
        \widehat{\varphi}_2
      \end{bmatrix}^T
  \mathbb{S}~
   \overline{\begin{bmatrix}
         \widehat{\varphi}_1\\
         \widehat{\varphi}_2
       \end{bmatrix}}\, dk\bigg)^{\frac{1}{2}}.
\end{equation}
%In other words, $K^{-1/2}_0$ is a weighted $L^2$ space. Hence, it is complete. 

\subsection{The NP operator and the single-layer potential on $K^{-1/2}_0$}
We define the NP operator and the single-layer potential on $K^{-1/2}_0$ by extending the formulas in Lemma \ref{lemma:KStarfreqency} and Lemma \ref{lemma:SinglelayeronclassM}, respectively.
\begin{definition}\label{def:Scal:K}
Let $\varphi\in  K^{-1/2}_0$ be given by $\varphi=U^{-1}P\hvarphi$.
\begin{enumerate}[(a), left=0.5em]
\item
We define the NP operator $\KstarOmega:K^{-1/2}_0\rightarrow K^{-1/2}_0$ 
by
\begin{equation}\label{eqn:KstarOMonKmonehalf}
\KstarOmega[\varphi]:=U^{-1}P\mathbb{K}\hvarphi.
\end{equation}
\item
We define the single-layer potential of $\varphi$: for $z=\Psi^{-1}(x+iy)\in\CC$,
\beq \label{eqn:SinglelayeronKmonehalf}
 \begin{aligned}
  \SingleOmega[\varphi](z):= &-\frac{1}{\sqrt{2\pi}}\int_{-\infty}^\infty\frac{1}{2|k|}
   \Big(e^{-|x-\frac{1}{2R}||k|}\,\hvarphi_R(k)
   +e^{-|x-\frac{1}{2r}||k|}\,\hvarphi_r(k) \Big) e^{iky}\, dk \\
   &+\frac{1}{\sqrt{2\pi}}\int_{-\infty}^\infty\frac{1}{2|k|}
   \Big(e^{-\frac{|k|}{2R}}\,\hvarphi_R(k)+e^{-\frac{|k|}{2r}}\,\hvarphi_r(k)\Big)\, dk,
    \end{aligned}
   \eeq
where $\hvarphi_R$ and $\hvarphi_r$ is given by 
\beq\label{def:hvarphi_R_r}
 \begin{bmatrix}
   \hvarphi_R\\ 
   \hvarphi_r
   \end{bmatrix}
  =P
  \begin{bmatrix}
  \hvarphi_1\\
  \hvarphi_2
  \end{bmatrix}
  =\frac{1}{\sqrt{2}}
  \begin{bmatrix}
  -\hvarphi_1+\hvarphi_2\\
  \hvarphi_1+\hvarphi_2
  \end{bmatrix}.
  \eeq
  \end{enumerate}
\end{definition}

\smallskip

It is worth emphasizing that \eqnref{eqn:KstarOMonKmonehalf} and \eqnref{eqn:SinglelayeronKmonehalf} hold for $\varphi\in \LtwozerobdOmega$. In other words, \eqnref{eqn:KstarOMonKmonehalf} and \eqnref{eqn:SinglelayeronKmonehalf} are natural extensions of the NP operator and the single-layer potential on $\LtwozerobdOmega$ to $K^{-1/2}_0$.

We observe that
\begin{equation*}
  \big\lVert\KstarOmega[\varphi]\big\rVert_{-1/2}^2=-\int_{-\infty}^\infty (\mathbb{K}\hvarphi)^T \mathbb{S}\overline{(\mathbb{K}\hvarphi)}\, dk\leq \frac{1}{4}\lVert\varphi\rVert^2_{-1/2}\quad\mbox{for any }\varphi\in K^{-1/2}_0.
\end{equation*}
Hence, $\KstarOmega$ is a bounded linear operator on $K^{-1/2}_0$ and its operator norm is bounded by $\frac{1}{2}$. 
Since $\mathbb{S}$ and $\mathbb{K}$ are real diagonal matrices, we have
$\mathbb{S}\overline{\mathbb{K}} = \mathbb{K}^T\mathbb{S}$.
This induces that
\begin{equation*}
  \langle \psi,\KstarOmega[\varphi]\rangle_{-1/2} = \langle \KstarOmega[\psi], \varphi \rangle_{-1/2}\quad\mbox{for all }\varphi,\psi\in K^{-1/2}_0.
\end{equation*}
In other words, $\KstarOmega$ is self-adjoint on $K^{-1/2}_0$.
In view of \eqnref{eqn:KstarOMonKmonehalf}, $\KstarOmega$ is identical to the matrix $\mathbb{K}$ via the transformation $U^{-1}P$. From this, one can infer that the spectrum of $\KstarOmega$ on $K^{-1/2}_0$ lies in the interval $[-1/2,1/2]$, that is the spectrum of $\mathbb{K}$. In Subsection \ref{subsection:spectraltheorem}, we will prove it by deriving the spectral resolution of the NP operator on $K^{-1/2}_0$.

\smallskip

In the remainder of this subsection, we obtain properties of the layer potential operators by assuming a decay condition on the density function as $k$ tends to zero.
\begin{lemma}\label{Scal:behavior}
Let $\varphi\in  K^{-1/2}_0$ be given by $\varphi=U^{-1}P\hvarphi$ satisfying $\widehat{\varphi}_1,\widehat{\varphi}_2\in L^1(\RR)$ and $\widehat{\varphi}_2(k)=O(|k|^{\frac{1}{4}})$ as $|k|\rightarrow 0$.
Set $z=z_1+iz_2=\Psi(x+iy)\in\CC$. Then, we have the following. 
\begin{enumerate}[(a), left=0.5em]
\item The single-layer potential $\Scal_{\p\Om}[\varphi](z)$ is continuous and uniformly bounded in $\CC$ and $\SingleOmega[\varphi](z)=O(|z|^{-1})$ as $|z|\rightarrow\infty$. 
\item The partial derivatives $\p_x \Scal_{\p\Om}[\varphi](z), \p_y \Scal_{\p\Om}[\varphi](z)$ are uniformly bounded for $x\neq\frac{1}{2R},\frac{1}{2r}$ and 
\begin{equation*}
  \frac{\p}{\p z_1} \SingleOmega[\varphi](z),\ \frac{\p}{\p z_2} \SingleOmega[\varphi](z)=(x^2+y^2)\, O\big(|x|^{-\frac{5}{8}}\big)\quad\mbox{as }|x|\rightarrow\infty,
\end{equation*}
where $O(|x|^{-\frac{5}{8}})$ is uniform with respect to $y$. 
\item The single-layer potential is harmonic, i.e., $\Delta_{(z_1,z_2)}\Scal_{\p\Om}[\varphi](z)=0$ in $\CC\setminus\p\Om$.
\end{enumerate}
\end{lemma}
\begin{proof}

From the assumption that $\widehat{\varphi}_2(k)=O(|k|^{\frac{1}{4}})$, the integral \eqnref{eqn:SinglelayeronKmonehalf} is finite for any $z\in\CC$. 
One can also show that $\Scal_{\p\Om}[\varphi](z)$ is continuous in the whole complex plane by applying the dominated convergence theorem. We can rewrite \eqnref{eqn:SinglelayeronKmonehalf} as
\beq \label{eqn:SinglelayeronKmonehalf3}
 \begin{aligned}
  \SingleOmega[\varphi](z)= &-\frac{1}{\sqrt{2\pi}}\int_{-\infty}^\infty\frac{1}{2|k|}
  \Big((e^{-|x-\frac{1}{2R}||k|} - e^{-\frac{|k|}{2R}})e^{iky}
   + e^{-\frac{|k|}{2R}}(e^{iky}-1)\Big)\hvarphi_R(k)\,dk\\
   &-\frac{1}{\sqrt{2\pi}}\int_{-\infty}^\infty\frac{1}{2|k|}
   \Big((e^{-|x-\frac{1}{2r}||k|} - e^{-\frac{|k|}{2r}})e^{iky}
   + e^{-\frac{|k|}{2r}}(e^{iky}-1)\Big)\hvarphi_r(k)\,dk.
         \end{aligned}
   \eeq
It then follows that  
\begin{equation*}
  \SingleOmega[\varphi](z)=\int_{-\infty}^\infty O(|x|+|y|)|\hvarphi_R(k)|\,dk + \int_{-\infty}^\infty O(|x|+|y|)|\hvarphi_r(k)|\,dk\quad\mbox{as }|x+iy|\rightarrow 0,
\end{equation*}
where $O(|x|+|y|)$ terms are uniform with respect to $k$. 
%Hence, we have $\SingleOmega[\varphi](\Psi^{-1}(x+iy))=O(|x+iy|)$ as $|x+iy|\rightarrow 0$. 
%Hence, we have
%$\SingleOmega[\varphi](z)=O(|z|^{-1})$ as $|z|\rightarrow\infty$. 
This proves (a). 

From \eqnref{eqn:SinglelayeronKmonehalf}, we have
\beq
\begin{aligned}
\frac{\p}{\p y} \SingleOmega[\varphi](z)=&-\frac{1}{\sqrt{2\pi}}\int_{-\infty}^\infty\frac{i k}{2|k|}
\sqrt{2} e^{-|x-\frac{1}{2R}||k|} \hvarphi_2(k)e^{iky}\,dk\\
&-\frac{1}{\sqrt{2\pi}}\int_{-\infty}^\infty\frac{i k}{2|k|}
\Big(e^{-|x-\frac{1}{2r}||k|} - e^{-|x-\frac{1}{2R}||k|}\Big)\hvarphi_r(k)e^{i k y}\,dk
\end{aligned}
\eeq
and
\beq\notag
\frac{\p}{\p x} \SingleOmega[\varphi](z)=
\begin{dcases}
\frac{1}{2\sqrt{2\pi}}\int_{-\infty}^\infty
\Big(e^{-(x-\frac{1}{2R})|k|}\,\hvarphi_R(k)+e^{-(x-\frac{1}{2r})|k|}\,\hvarphi_r(k)\Big)e^{iky}\,dk\quad&\mbox{for }x>\frac{1}{2r},\\[1mm]
\frac{1}{2\sqrt{2\pi}}\int_{-\infty}^\infty
\Big(e^{-(x-\frac{1}{2R})|k|}\,\hvarphi_R(k)-e^{(x-\frac{1}{2r})|k|}\,\hvarphi_r(k)\Big)e^{iky}\,dk\quad&\mbox{for }\frac{1}{2R}<x<\frac{1}{2r},\\[1mm]
\frac{1}{2\sqrt{2\pi}}\int_{-\infty}^\infty
\Big(-e^{(x-\frac{1}{2R})|k|}\,\hvarphi_R(k)-e^{(x-\frac{1}{2r})|k|}\,\hvarphi_r(k)\Big)e^{iky}\,dk\quad&\mbox{for }x<\frac{1}{2R}.
\end{dcases}
\eeq
Because of $\hvarphi_1,\hvarphi_2\in L^1(\RR)$, $\frac{\p}{\p x} \SingleOmega[\varphi](z)$ and $\frac{\p}{\p y} \SingleOmega[\varphi](z)$ are uniformly bounded in $\CC$. 
We have
\begin{align*}
\frac{\p}{\p x} \SingleOmega[\varphi](z),\ \frac{\p}{\p y} \SingleOmega[\varphi](z)
&=\int_{-\infty}^\infty e^{-|x-\frac{1}{2R}||k|}O\big(|k|^{\frac{1}{4}}\big)\big(1+\hvarphi_r(k)\big)dk\\
&=O\big(|x|^{-\frac{5}{8}}\big)\quad\mbox{as }|x|\rightarrow\infty,
\end{align*}
where the second equality can be derived by splitting the integral into $|k|<|x-\frac{1}{2R}|^{-\frac 1 2}$ and $|k|>|x-\frac{1}{2R}|^{-\frac 1 2}$.
This proves (b).

Recall that $\Psi$ is a conformal mapping. 
By taking the Laplacian for the right-hand side of \eqnref{eqn:SinglelayeronKmonehalf} (switching the order of differentiation and integration), we observe (c).
\end{proof}

\begin{lemma}\label{lemma:gradientvalue}
Let $\varphi\in  K^{-1/2}_0$ be given by $\varphi=U^{-1}P\hvarphi$ satisfying $\widehat{\varphi}_1,\widehat{\varphi}_2\in L^1(\RR)$ and $\widehat{\varphi}_2(k)=O(|k|^{\frac{1}{4}})$ as $|k|\rightarrow 0$. We have
\begin{align}\label{gradientvalue:int1}
  &\big\lVert\nabla \SingleOmega[\varphi]\big\rVert_{L^2(\Omega)}^2 
 = \Big\langle \varphi,\Big(\frac{1}{2}I-\KstarOmega\Big)[\varphi] \Big\rangle_{-1/2}<\infty,\\
\label{gradientvalue:int2}
  &\big\lVert\nabla \SingleOmega[\varphi]\big\rVert_{L^2(\RR^2\setminus \overline{\Omega})}^2 
 = \Big\langle \varphi,\Big(\frac{1}{2}I+\KstarOmega\Big)[\varphi] \Big\rangle_{-1/2}<\infty.
\end{align}
\end{lemma} 
\begin{proof}
We define $\hvarphi_R$ and $\hvarphi_r$ as in \eqnref{def:hvarphi_R_r}. Note that 
\beq\label{varphi:deday}
\widehat{\varphi}_R(k)+\widehat{\varphi}_r(k)=\sqrt{2}\, \widehat{\varphi}_2(k)=O(|k|^{\frac{1}{4}})\quad\text{as } |k|\rightarrow 0.
\eeq
For fixed $s>0$, we set 
\begin{equation*}
  \Om_s := \bigg\{z=\Psi(w)\,\Big\vert\, w = x+iy\mbox{ with }\frac{1}{2R}<|x|<\frac{1}{2r},~ |y|<s\bigg\}.
\end{equation*}
Then, $\Om_s$ is a Lipschitz domain. 
We identify $z=z_1+i z_2$ with $\bfz=(z_1,z_2)\in\RR^2$. 
%Note that $\Omega_s:=\Psi(S_s)$ is a Lipschitz domain, where $S_s$ is given by 
%$S_w=\big\{(x,y)\in\mathbb{R}^2\,\big|\,\frac{1}{2R}<|x|<\frac{1}{2r},~ |y|<s\big\}$.
 Applying the divergence theorem, we obtain
\begin{align}
  \int_{\Omega_s}|\nabla\SingleOmega[\varphi]|^2\, d\bfz \notag
  =&\int_{\partial\Omega_s}\overline{\SingleOmega[\varphi]}\, \frac{\partial}{\partial\nu}\SingleOmega[\varphi]\Big|^- d\sigma \notag\\
  =&\int_{x=\frac{1}{2R},\,|y|<s} \overline{\SingleOmega[\varphi]}\,\frac{\partial}{\partial\nu}\SingleOmega[\varphi]\Big|^-h_R(y) dy
  +\int_{x=\frac{1}{2r},\,|y|<s}\overline{\SingleOmega[\varphi]}\,\frac{\partial}{\partial\nu}\SingleOmega[\varphi]\Big|^-h_r(y) dy\notag\\\notag
  &\ +\int_{\frac{1}{2R}<x<\frac{1}{2r},\,|y|=s}\overline{\SingleOmega[\varphi]}\,\frac{\partial}{\partial\nu}\SingleOmega[\varphi]\Big|^- h(x,y) dx\\\label{eqn:I_II_III}
  =&:I+II+III.
\end{align}

We first estimate $III$ as $s\rightarrow\infty$. For $(x,y)$ in the domain of the integral $III$, we have
\begin{align}
		&\frac{\partial}{\partial\nu}\SingleOmega[\varphi]\Big|^- h(x,y)
		=\pm\frac{\partial}{\partial y}\SingleOmega[\varphi]\Big|_{y=\pm s} \notag \\
		=&\mp \frac{1}{\sqrt{2\pi}}\bigg(\int_{-\infty}^\infty\frac{ik}{2|k|}e^{-|x-\frac{1}{2R}||k|}\, \widehat{\varphi}_R(k)e^{\pm iks}\, dk
		+\int_{-\infty}^\infty\frac{ik}{2|k|}e^{-|x-\frac{1}{2r}||k|}\,\widehat{\varphi}_r(k)e^{\pm iks}\, dk
		\bigg),\label{Scalderiv:III}
\end{align}
where the first equality holds similarly to \eqnref{normal_deriv} and the second one is from \eqnref{eqn:SinglelayeronKmonehalf}.
Applying the Riemann--Lebesgue lemma to \eqnref{Scalderiv:III}, we obtain
\begin{align*}
 &\frac{\partial}{\partial\nu}\SingleOmega[\varphi]\Big|^- h(x,y)
 \rightarrow 0\quad\mbox{as }s\rightarrow\infty.
\end{align*}
Note that $\SingleOmega[\varphi](z)$ and $h(x,y)\nabla_{(z_1,z_2)} \SingleOmega[\varphi](z)$ are uniformly bounded independent of $s$ for $(x,y)$ satisfying $\frac{1}{2R}<x<\frac{1}{2r},~|y|\geq s$. It then holds by applying the dominated convergence theorem that
\beq\label{eqn:III}
III\rightarrow 0\quad\mbox{as }s\rightarrow\infty.
\eeq

Now, we estimate $I+II$. 
From \eqnref{normal_deriv} and \eqnref{eqn:SinglelayeronKmonehalf}, we have 
\begin{align*}
\SingleOmega[\varphi](z)
   =&-{\mathcal{F}}^{-1}\Big[\frac{1}{2|k|}
   \big({\hvarphi_R(k)}  +e^{-|k|q}\,{\hvarphi_r(k)}\big)\Big](y) +C\quad\mbox{in }I,\\
   %%%%
  \SingleOmega[\varphi](z)
%   = &-\frac{1}{\sqrt{2\pi}}\int_{-\infty}^\infty\frac{1}{2|k|}
%   (e^{-|k|q}\,\hvarphi_R(k)
%   +\hvarphi_r(k)  ) e^{iky}\, dk +C\quad\mbox{in }II\\
   = &-{\mathcal{F}}^{-1}\Big[\frac{1}{2|k|}
   \big(e^{-|k|q}\,{\hvarphi_R(k)}
   +{\hvarphi_r(k)}\big)\Big] (y)+{C}\quad\mbox{in }II,
      \end{align*}
where $C$ is the constant given by $C=\frac{1}{\sqrt{2\pi}}\int_{-\infty}^\infty\frac{1}{2|k|}
   \big(e^{-\frac{|k|}{2R}}\,\hvarphi_R(k)+e^{-\frac{|k|}{2r}}\,\hvarphi_r(k)\big) dk$.
   We also have 
\begin{align*}
 \frac{\partial}{\partial\nu}\SingleOmega[\varphi]\Big|^-  h_R(y)
  &=-\frac{\partial}{\partial x}\SingleOmega[\varphi]\Big|^+_{x=\frac{1}{2R}} % \label{Scalderiv:I}
    = \frac{1}{2}\,{\mathcal{F}}^{-1}\big[-\widehat{\varphi}_R(k)+ e^{-|k|q}\,\widehat{\varphi}_r(k)\big](y)\quad\mbox{in }I,\\[1mm]  %\label{Scalderiv:II}
 \frac{\partial}{\partial\nu}\SingleOmega[\varphi]\Big|^-  h_r(y)
 &=\frac{\partial}{\partial x}\SingleOmega[\varphi]\Big|^-_{x=\frac{1}{2r}}
    =-\frac{1}{2}\,{\mathcal{F}}^{-1}\big[- e^{-|k|q}\,\widehat{\varphi}_R(k)
   +\widehat{\varphi}_r(k)\big](y)\quad\mbox{in }II.
\end{align*}
In other words,
\begin{align}\label{Scal:fourier}
\begin{bmatrix}
\mathcal{F}\big[{\SingleOmega[\varphi]}\big|_{x=\frac{1}{2R}}-C\big] \\
\mathcal{F}\big[{\SingleOmega[\varphi]}\big|_{x=\frac{1}{2r}}-C\big]
\end{bmatrix}^T
=\begin{bmatrix}
     \widehat{\varphi}_R\\\widehat{\varphi}_r
   \end{bmatrix}^T P^{-1}\,\mathbb{S}\,P
\end{align}
and
\begin{align}\label{Scal:normal:fourier1}
 \begin{bmatrix}
 \mathcal{F}\big[\frac{\partial}{\partial\nu}\SingleOmega[\varphi]\Big|^-  h_R\big]\\[2mm]
  \mathcal{F}\big[\frac{\partial}{\partial\nu}\SingleOmega[\varphi]\Big|^-  h_r\big]
 \end{bmatrix}
 = -P^{-1}(\frac{1}{2}I-\mathbb{K})P
 \begin{bmatrix}
     \widehat{\varphi}_R\\\widehat{\varphi}_r
   \end{bmatrix}.
 \end{align}
Applying the Plancherel theorem, we derive that as $s\rightarrow\infty$,
\begin{align}\notag
I+II\,\rightarrow\,
&\int_{-\infty}^\infty \overline{\begin{bmatrix}
 \widehat{\varphi}_1\\\widehat{\varphi}_2 \end{bmatrix}}^T(-\mathbb{S})(\frac{1}{2}I-\mathbb{K})
 {\begin{bmatrix}
     \widehat{\varphi}_1\\\widehat{\varphi}_2
   \end{bmatrix}}dk+C'
   \end{align}
with
 \begin{align*}
 C' =&\frac{\overline{C}}{2}\lim_{s\rightarrow\infty}\int_{-s}^s
     \bigg[\mathcal{F}^{-1}(\widehat{\varphi}_R)+\mathcal{F}^{-1}(\widehat{\varphi}_r)
   -\mathcal{F}^{-1}\big(e^{-|k|q}\,\widehat{\varphi}_R\big)
   -\mathcal{F}^{-1}\big(e^{-|k|q}\,\widehat{\varphi}_r\big)\bigg]
   dy.
%   &\qquad\times\int_{-\infty}^\infty\frac{1}{2|k|}\bigg( e^{-\frac{|k|}{2R}}\, \overline{\widehat{\varphi}_R}+e^{-\frac{|k|}{2r}}\, \overline{\widehat{\varphi}_r}\bigg)dk.
\end{align*}
We claim that $C'=0$. 
Let $\widehat{g}\in L^1(\mathbb{R})$ and $\widehat{g}(k)=O(|k|^{\frac{1}{4}})$ as $|k|\rightarrow 0$. It then holds by Fubini's theorem and the dominated convergence theorem that
\begin{align*}
\int_{-s}^s \mathcal{F}^{-1}(\, \widehat{g}\,)dy
 &=\int_{-s}^s \int_{-\infty}^\infty\widehat{g}(k)e^{iky}\,dk\,dy
 =\int_{-\infty}^\infty \widehat{g}(k)\int_{-s}^se^{iky}\,dy \, dk
 =2\int_{-\infty}^\infty \frac{\widehat{g}(k)}{k} \,\sin(ks)\,dk.
\end{align*}
By the Riemann--Lebesgue lemma, the last term converges to $0$ as $s\rightarrow \infty$ and, thus,\begin{equation*}
 \lim_{s\rightarrow\infty}\int_{-s}^s \mathcal{F}^{-1}(\, \widehat{g}\, )dy=0.
\end{equation*}
This implies that $C'=0$. From \eqnref{def:single}, \eqnref{eqn:KstarOMonKmonehalf}, \eqnref{eqn:I_II_III} and \eqnref{eqn:III}, we prove \eqnref{gradientvalue:int1}.

\smallskip

Note that 
\begin{equation*}
  \Om^e = \bigg\{z=\Psi(w) \,\Big\vert\, w = x+iy\mbox{ with }x\in(\frac{1}{2r},\infty)\cup(-\infty,\frac{1}{2R})\bigg\}.
\end{equation*}
From Lemma \ref{Scal:behavior}\,(b), we obtain
\begin{equation*}
  \int_{\Omega^e}|\nabla\SingleOmega[\varphi]|^2\leq C\int_{x\in (-\infty,\frac{1}{2R})\cup (\frac{1}{2r},\infty)}\int_{-\infty}^\infty \, |x|^{-\frac{5}{8}} \frac{1}{x^2+y^2}\, dy \,dx<\infty
\end{equation*}
and 
\begin{align}
 &\int_{\Omega^e}|\nabla\SingleOmega[\varphi]|^2\, d\bfz \notag\\ \notag
  =&-\int_{\partial\Omega^e}\overline{\SingleOmega[\varphi]}\, \frac{\partial}{\partial\nu}\SingleOmega[\varphi]\Big|^+ d\sigma \notag\\
  =&-\int_{x=\frac{1}{2R},\,y\in\RR} \overline{\SingleOmega[\varphi]}\,\frac{\partial}{\partial\nu}\SingleOmega[\varphi]\Big|^+ h_R(y) dy
  -\int_{x=\frac{1}{2r},\,y\in\RR}\overline{\SingleOmega[\varphi]}\,\frac{\partial}{\partial\nu}\SingleOmega[\varphi]\Big|^+ h_r(y) dy\notag\\\notag
 \\\notag
  =&:I+II.
\end{align}
%As in the discussion above, we have $III\rightarrow0$ as $s\rightarrow\infty$. 
Note that
\begin{align*}
 \frac{\partial}{\partial\nu}\SingleOmega[\varphi]\Big|^+  h_R(y)
  &=-\frac{\partial}{\partial x}\SingleOmega[\varphi]\Big|^-_{x=\frac{1}{2R}}  
    = \frac{1}{2}\,{\mathcal{F}}^{-1}\big[\widehat{\varphi}_R(k)+ e^{-|k|q}\,\widehat{\varphi}_r(k)\big](y)\quad\mbox{in }I,\\[1mm]
 \frac{\partial}{\partial\nu}\SingleOmega[\varphi]\Big|^+  h_r(y)
 &=\frac{\partial}{\partial x}\SingleOmega[\varphi]\Big|^+_{x=\frac{1}{2r}}
    =-\frac{1}{2}\,{\mathcal{F}}^{-1}\big[- e^{-|k|q}\,\widehat{\varphi}_R(k)
   -\widehat{\varphi}_r(k)\big](y)\quad\mbox{in }II,
   \end{align*}
  which implies that
  \begin{align}\label{Scal:normal:fourier2}
 \begin{bmatrix}
 \mathcal{F}\Big[\frac{\partial}{\partial\nu}\SingleOmega[\varphi]\Big|^+  h_R\Big]\\[2mm]
  \mathcal{F}\Big[\frac{\partial}{\partial\nu}\SingleOmega[\varphi]\Big|^+  h_r\Big]
 \end{bmatrix}
 = P^{-1}(\frac{1}{2}I+\mathbb{K})P
 \begin{bmatrix}
     \widehat{\varphi}_R\\\widehat{\varphi}_r
   \end{bmatrix}.
 \end{align}
Applying also \eqnref{Scal:fourier}, one can derive that
\begin{align}\notag
I+II= \mbox{const.}+\int_{-\infty}^\infty \overline{\begin{bmatrix}
 \widehat{\varphi}_1\\\widehat{\varphi}_2 \end{bmatrix}}^T(-\mathbb{S})(\frac{1}{2}I+\mathbb{K})
 {\begin{bmatrix}
     \widehat{\varphi}_1\\\widehat{\varphi}_2
   \end{bmatrix}}dk.
   \end{align}
One can show that the constant term is zero similarly to the proof of $C'=0$. 
Hence, we prove \eqnref{gradientvalue:int2}.

\end{proof}

\subsection{Spectral resolution of $\KstarOmega$ on $K^{-1/2}_0$}\label{subsection:spectraltheorem}
To derive the spectral resolution of the NP operator on $K^{-1/2}_0$, we define a pair of orthogonal projection operators $\mathcal{P}_1(s)$ and $\mathcal{P}_2(s)$ on $K^{-1/2}_0$ for each $s\in\RR\cup\{\infty\}$.
Let $\varphi\in K^{-1/2}_0$ be given by $\varphi=U^{-1}P\hvarphi$ and $\hvarphi=[\hvarphi_1,\hvarphi_2]^T$. We define 
\begin{align*}
\mathcal{P}_1(s)\varphi&= U^{-1}P\begin{bmatrix}
      \chi_{(-\infty,\, s\,]}\, \widehat{\varphi}_1\\[2mm]
        0
      \end{bmatrix},\quad
\mathcal{P}_2(s)\varphi = U^{-1}P\begin{bmatrix}
       0\\
     \chi_{(-\infty,\, s\,]}\, \widehat{\varphi}_2
      \end{bmatrix}
            \end{align*}
for $s\in\RR$ and 
\begin{equation*}
  \mathcal{P}_1(\infty)\varphi=U^{-1}P\begin{bmatrix}
        \widehat{\varphi}_1\\[2mm]
          0
        \end{bmatrix}
        ,\quad
        \mathcal{P}_2(\infty)\varphi = U^{-1}P\begin{bmatrix}
         0\\
       \widehat{\varphi}_2
        \end{bmatrix}.
\end{equation*}
 Note that $\mathcal{P}_1(\infty)+\mathcal{P}_2(\infty)=\mathbb{I},$ where $\mathbb{I}$ is the identity operator on $K^{-1/2}_0$.
We then define a family of projection operators $\mathbb{E}(t)$ on $K^{-1/2}_0$, $t\in [-1/2,1/2]$, by
\begin{equation}\label{def:resolutionoftheidentity}
  \mathbb{E}(t)
  :=\begin{cases}
    \mathcal{P}_1\Big(-\frac{\ln(-2t)}{q}\Big)-\mathcal{P}_1\Big(\frac{\ln(-2t)}{q} \Big), \quad&t\in[-1/2,0),\\[2mm]
   \mathcal{P}_1(\infty),\quad&t=0,\\[2mm]
    -\mathcal{P}_2\Big(-\frac{\ln(2t)}{q} \Big)+\mathcal{P}_2\Big(\frac{\ln(2t)}{q}\Big)+\mathbb{I}, \quad&t\in(0, 1/2].
   \end{cases}
\end{equation}
It is straightforward to obtain that, for $s,t\in[-1/2,1/2]$,
\beq\label{resolution:I}
\begin{cases}
    \mathbb{E}(t)\,\mathbb{E}(s)=\mathbb{E}(\min(t,s)),\\
    \lim_{t\rightarrow s^+}\mathbb{E}(t)=\mathbb{E}(s),\\
    \mathbb{E}(-\frac{1}{2})=0,\text{ and } \mathbb{E}(\frac{1}{2}) =\mathbb{I}.
\end{cases}
\eeq
The limit in \eqnref{resolution:I} is in the sense of strong convergence, i.e., 
$\lim_{t\rightarrow s^+}\mathbb{E}(t)\psi=\mathbb{E}(s)\psi$ for all $\psi\in K^{-1/2}_0$,
which holds from the dominated convergence theorem and the fact that the integral in \eqnref{def:K:norm} is finite. In short, we have the following lemma.
\begin{lemma}\label{prop:innerproductrep}
The family of operators $\{\mathbb{E}(t)\}_{t\in[-\frac 1 2, \frac 1 2]}$ is a resolution of identity on $K^{-1/2}_0$ and satisfies
 \beq
 \lim_{t\rightarrow s}\mathbb{E}(t)=\mathbb{E}(s)
 \eeq
 in the sense of strong convergence. 
 \end{lemma}

We have
\begin{equation*}
  \la \psi,\mathcal{P}_1(s)\varphi\ra_{-1/2}= \int_{-\infty}^s
     \begin{bmatrix}
          \widehat{\psi}_1\\
          \widehat{\psi}_2
        \end{bmatrix}^T
  (-\mathbb{S})~
     \overline{\begin{bmatrix}
           \widehat{\varphi}_1\\
          0
         \end{bmatrix}}\, dk,
\end{equation*}
 which is almost everywhere differentiable in $s$ from the Lebesgue differentiation theorem. Similarly, $\big\langle \psi,\, \mathbb{E}(t)\varphi\big\rangle_{-1/2}$ is almost everywhere differentiable in $t$. 
\begin{lemma}\label{lemma:resolution}
 For all $\varphi,\psi\in K^{-1/2}_0$, it holds that
  \begin{equation}\label{psi:E:phi}
    \langle \psi,\varphi\rangle_{-1/2}=\int_{-\frac{1}{2}}^{\frac 1 2}d\big\langle \psi,\, \mathbb{E}(t)\varphi\big\rangle_{-1/2}.
  \end{equation}
\end{lemma}
\begin{proof}
Let $\varphi,\psi$ be given by \eqnref{varphi:hat}.
It then holds from \eqnref{def:single} that 
\begin{align}\label{eqn:innerpossms}
 &\langle \psi, \,\mathcal{P}_1(\pm k)\varphi\rangle_{-1/2} 
  =\frac{1}{2}\int_{-\infty}^{\pm k} \widehat{\psi}_1(s)\, \overline{\widehat \varphi_1(s)}\, \frac{1-e^{-q|s|}}{|s|}\, ds\quad\mbox{for }k>0.
%\label{eqn:innerpossps}
%&\langle \psi, \,\mathcal{P}_2(\pm \xi)\varphi\rangle_{-1/2} 
%  =\frac{1}{2}\int_{-\infty}^{\pm \xi} \widehat{\psi}_2(s)\, \overline{\widehat \varphi_2(s)}\, \frac{1+e^{-q|s|}}{|s|}\, ds.
\end{align}
From \eqnref{eqn:innerpossms} and a similar derivation, we obtain
\begin{align*}
\frac{d}{dk}\la \psi,\big(\mathcal{P}_1(k)-\mathcal{P}_1(-k)\big)\varphi\ra_{-1/2}&=\frac{1}{2}\,\widehat \psi_1(k) \, \overline{\widehat \varphi_1(k)}\, \frac{1-e^{-q|k|}}{|k|} + \frac{1}{2}\,\widehat \psi_1(-k) \, \overline{\widehat \varphi_1(-k)}\, \frac{1-e^{-q|k|}}{|k|},\\
%\end{align*}
%and, by a similar computation,
%\begin{align*}
\frac{d}{dk}\la \psi,\big(\mathcal{P}_2(k)-\mathcal{P}_2(-k)\big)\varphi\ra_{-1/2}&=\frac{1}{2}\,\widehat \psi_2(k)\, \overline{\widehat \varphi_2(k)}\, \frac{1+e^{-q|k|}}{|k|} + \frac{1}{2}\,\widehat \psi_2(-k) \, \overline{\widehat \varphi_2(-k)}\, \frac{1+e^{-q|k|}}{|k|}.
\end{align*}
We then obtain that, letting $t =-\frac{1}{2}e^{-qk}\in[-1/2, 0)$,
\begin{align*}
  \frac{1}{2}\int_{-\infty}^{\infty}\widehat \psi_1(k) \, \overline{\widehat \varphi_1(k)}\, \frac{1-e^{-q|k|}}{|k|}\, dk
  =&\int_0^\infty \frac{d}{dk}\la \psi,\big(\mathcal{P}_1(k)-\mathcal{P}_1(-k)\big)\varphi\ra_{-1/2}dk\\
  =&\int_{-\frac 1 2}^{0}  d\Big\langle \psi,\, \bigg(\mathcal{P}_1\Big(-\frac{\ln(-2t)}{q}\Big)-\mathcal{P}_1\Big(\frac{\ln(-2t)}{q} \Big)\bigg)\,  \varphi\Big\rangle_{-1/2}
\end{align*}
and that, letting $t =\frac{1}{2}e^{-qk}\in (0,1/2]$,
\begin{align*}
  \frac{1}{2}\int_{-\infty}^{\infty}\widehat \psi_2(k) \, \overline{\widehat \varphi_2(k)}\, \frac{1+e^{-q|k|}}{|k|}\, dk
  =&\int_0^\infty \frac{d}{dk}\la \psi,\big(\mathcal{P}_2(k)-\mathcal{P}_2(-k)\big)\varphi\ra_{-1/2}dk\\
  =&\int_{\frac 1 2}^0  d\Big\langle \psi,\bigg(\mathcal{P}_2\Big(-\frac{\ln(2t)}{q}\Big)-\mathcal{P}_2\Big(\frac{\ln(2t)}{q} \Big)\bigg) \varphi\Big\rangle_{-1/2}.
\end{align*}
Since $\la \psi,\,\mathbb{I}\varphi\ra_{-1/2}$ is constant, we have $d\la \psi,\,\mathbb{I}\varphi\ra_{-1/2}=0$. Hence, we complete the proof.
\end{proof}

\begin{theorem}\label{thm:spectrumtheoremKstarOmega}
  Let $\big\{\mathbb{E}(t)\big\}_{t\in[-1/2,1/2]}$ be the resolution of the identity on $K^{-1/2}_0$ given by \eqnref{def:resolutionoftheidentity}. We have the spectral resolution of $\KstarOmega$ on $K^{-1/2}_0$ as
  \begin{equation*}
    \KstarOmega = \int_{-\frac 1 2}^{\frac 1 2}\,t\, d\,\mathbb{E}(t).
  \end{equation*}
  In other words, it holds that
  \begin{equation}\label{eqn:spectrumtheoremKstarOmega}
    \big\langle \psi, \KstarOmega[\varphi] \big\rangle_{-1/2}= \int_{-\frac 1 2}^{\frac 1 2}\,t\, d\,\big\langle \psi, \mathbb{E}(t)\varphi \big\rangle_{-1/2}\quad\mbox{for all }\psi,\varphi\in K^{-1/2}_0.
  \end{equation}
\end{theorem}
\begin{proof}
Let $\varphi,\psi$ be given by \eqnref{varphi:hat}. From the definition of the NP operator and the inner product on $K^{-1/2}_0$, we have \begin{align*}
    \langle \psi, \KstarOmega[\varphi] \rangle_{-1/2}
     =&-\frac{1}{4}\int_{-\infty}^\infty e^{-q|k|}\, \widehat{\psi}_1\, 
     \overline{\widehat{\varphi}_1}\, \frac{1-e^{-q|k|}}{|k|}\, dk
     +\frac{1}{4}\int_{-\infty}^\infty e^{-q|k|}\, \widehat{\psi}_2\, 
     \overline{\widehat{\varphi}_2} \, \frac{1+e^{-q|k|}}{|k|}\, dk\\
     =&:I+II.
     \end{align*}
Following the derivation in the proof of Lemma \ref{lemma:resolution}, we obtain
\begin{align*}
I=&\int_0^\infty \Big(-\frac 1 2 e^{-q|k|}\Big)\frac{d}{dk}\la \psi,\big(\mathcal{P}_1(k)-\mathcal{P}_1(-k)\big)\varphi\ra_{-1/2}dk\\
  =&\int_{-\frac 1 2}^{0} t\, d\Big\langle \psi,\, \bigg(\mathcal{P}_1\Big(-\frac{\ln(-2t)}{q}\Big)-\mathcal{P}_1\Big(\frac{\ln(-2t)}{q} \Big)\bigg)\,  \varphi\Big\rangle_{-1/2}
  \end{align*}
  and 
  \begin{align*}
  II =&\int_0^\infty \Big( \frac 1 2 e^{-q|k|} \Big)\frac{d}{dk}\la \psi,\big(\mathcal{P}_2(k)-\mathcal{P}_2(-k)\big)\varphi\ra_{-1/2}dk \\
=&\int_{\frac 1 2}^0 t \, d\Big\langle \psi,\bigg(\mathcal{P}_2\Big(-\frac{\ln(2t)}{q}\Big)-\mathcal{P}_2\Big(\frac{\ln(2t)}{q} \Big)\bigg) \varphi\Big\rangle_{-1/2}.
  \end{align*}
  This proves the theorem. 
\end{proof}
The condition
$
  \lim_{t\rightarrow s^-}\mathbb{E}(t) \neq \mathbb{E}(s)
$
characterizes the point spectrum of $\KstarOmega$.
By Lemma \ref{prop:innerproductrep}, we conclude that $\KstarOmega$ has only a continuous spectrum. We need the following lemma to prove $\KstarOmega$ has only an absolutely continuous spectrum.

\begin{lemma}\label{lemma:measuredensity}
We have
\begin{align*}
  \frac{d}{dt}\big\langle \psi,\, \mathbb{E}(t)\varphi\big\rangle_{-1/2}
  =\begin{dcases}
  \frac{1+2t}{2|t\ln(2|t|)|}\Big(\,\hpsi_1(k)\, \overline{ \hvarphi_1(k)}+\hpsi_1(-k)\, \overline{\hvarphi_1(-k)}\, \Big) ,\quad&t\in[-1/2,0),\\[1mm]
  \frac{1+2t}{2|t\ln(2|t|)|}\Big(\,\hpsi_2(k)\, \overline{ \hvarphi_2(k)}+\hpsi_2(-k)\, \overline{\hvarphi_2(-k)}\, \Big), \quad&t\in(0,1/2]
  \end{dcases}
\end{align*}
with  $k = -\frac{1}{q}\ln(2|t|)$.
\end{lemma}
\begin{proof}
First, we consider the case $t\in[-1/2,0)$. From the derivation in the proof of Lemma \ref{lemma:resolution}, we have $\mathbb{E}(t)=P_1(k)-P_1(-k)$ with $t=-\frac{1}{2}e^{-qk}$ ($k>0$)
and 
\begin{align}\notag
\la \psi,\mathbb{E}(t)\varphi\ra_{-1/2}&=\frac{1}{2}\int_{-k}^k \hpsi_1(s)\,\overline{\hvarphi_1(s)}\,\frac{1-e^{-q|s|}}{|s|}\,ds,\\ \label{eqn1:lemma:measuredensity}
 \frac{d}{dt}\big\langle \psi,\, \mathbb{E}(t)\varphi\big\rangle_{-1/2}
 &=\frac{1}{2}\Big(\,\hpsi_1(k)\, \overline{ \hvarphi_1(k)}+\hpsi_1(-k)\, \overline{\hvarphi_1(-k)}\, \Big)\frac{1-e^{-q|k|}}{|k|}\frac{dk}{dt}.
 \end{align}
 
Now, let $t\in(0,1/2]$. We have $\mathbb{E}(t)=-P_2(k)+P_2(-k)+\mathbb{I}$ with $t=\frac 1 2 e^{-qk}$ ($k>0$) and 
\begin{align}\notag
\la \psi,\mathbb{E}(t)\varphi\ra_{-1/2}&=-\frac{1}{2}\int_{-k}^k \hpsi_2(s)\,\overline{\hvarphi_2(s)}\,\frac{1+e^{-q|s|}}{|s|}\,ds+\mbox{const.},\\ \label{eqn2:lemma:measuredensity}
 \frac{d}{dt}\big\langle \psi,\, \mathbb{E}(t)\varphi\big\rangle_{-1/2}
 &=-\frac{1}{2}\Big(\,\hpsi_2(k)\, \overline{ \hvarphi_2(k)}+\hpsi_2(-k)\, \overline{\hvarphi_2(-k)}\, \Big)\frac{1+e^{-q|k|}}{|k|}\frac{dk}{dt}.
 \end{align}
From \eqnref{eqn1:lemma:measuredensity} and \eqnref{eqn2:lemma:measuredensity}, we prove the lemma.
\end{proof}

For any $\varphi\in K^{-1/2}_0$, it holds from \eqnref{eqn1:lemma:measuredensity} and \eqnref{eqn2:lemma:measuredensity} that
\begin{align}\label{Q:L1}
  \int_{-\frac 1 2}^{\frac 1 2}\bigg|\frac{d}{dt}\big\langle \varphi,\, \mathbb{E}(t)\varphi\big\rangle_{-1/2}\bigg|\, dt 
 &= \frac{1}{2}\int_{-\infty}^\infty\bigg(\frac{1-e^{-q|k|}}{|k|}\, |\hvarphi_1(k)|^2+\frac{1+e^{-q|k|}}{|k|}\, |\hvarphi_2(k)|^2\bigg)\, dk<\infty.
\end{align}
In other words, $\frac{d}{dt}\big\langle \varphi,\, \mathbb{E}(t)\varphi\big\rangle_{-1/2}$ is integrable for any $\psi\in K^{-1/2}_0$, which implies that $\frac{d}{dt}\big\langle \psi,\, \mathbb{E}(t)\varphi\big\rangle_{-1/2}$ is integrable for any $\psi,\varphi\in K^{-1/2}_0$. 
In view of \eqnref{eqn:spectrumtheoremKstarOmega}, we obtain the following theorem.
\begin{theorem}
 The NP operator $\KstarOmega$ on $K^{-1/2}_0$ has only the absolutely continuous spectrum $[-\frac 1 2,\frac 1 2]$.
\end{theorem}

\section{Plasmon resonance on a crescent-shaped domain}\label{subsection:plasmonicresonance}
In this section, we analyze the plasmon resonance on a crescent-shaped domain $\Om=B_R\setminus\overline{B_r}$. 

We consider the transmission problem \eqnref{eqn:u_delta} with $\epsilon$ given by 
\beq\label{ep:Om}
\epsilon = (\epsilon_c+i\delta)\chi_\Om +\chi_{\mathbb{R}^2\setminus\overline{\Om}}.
\eeq
%and $f$ is a source function compactly supported in $\RR^2\setminus\overline{\Om}$ with the zero mean. 
Let $f$ be compactly supported away from $\Om$ so that the Newtonian potential $F$ of $f$ is 
smooth in a neighborhood of $\overline{\Om}$. It then holds from Lemma \ref{lemma:L20:K} that $\p_\nu F\in K_0^{-1/2}$.
%We denote by $F$ the Newtonian potential of $f$. Since $f$ is compactly supported away from $\Om$, $F$ is smooth in a neighborhood of $\p\Om$. Hence, by Lemma \ref{lemma:L20:K}, it holds that $\p_\nu F\in K_0^{-1/2}$.

Set $\lambda_\delta$ as in \eqnref{eqn:intgralequationintro}. Because of $\lambda_\delta\in\mathbb{R}^2\setminus[-\frac 1 2,\frac 1 2]$, the problem
\begin{equation}\label{eqn:intgralequationforplasmonicresonance}
	(\lambda_\delta I -\KstarOmega)\big[\varphi^\delta\big] = \partial_\nu F
	\end{equation}
is solvable in $K^{-1/2}_0$. The solution $\varphi^\delta\in K^{-1/2}_0$ is of the form (see \eqnref{varphi_K})
\begin{equation*}
  \varphi^\delta=U^{-1}P\,\widehat{\varphi}^\delta\quad\mbox{with }\widehat{\varphi}^\delta=\begin{bmatrix}\widehat{\varphi}^{\delta}_{1}\\\widehat{\varphi}^{\delta}_2
     \end{bmatrix},
\end{equation*}
which, by Theorem \ref{thm:spectrumtheoremKstarOmega}, admits the following integral expression:
\begin{align}\label{eqn:varphideltaint}
  \varphi^\delta 
 &= \int_{-\frac 1 2}^{\frac 1 2}\frac{1}{\lambda_\delta-t}\, d\, \mathbb{E}(t)[\partial_{\nu}F].\end{align}

Recall the definition of $U$ in \eqnref{def:U}. For any fixed positive integer $n$, if $g\in L^1(\RR)$ satisfies $\partial^\alpha g\in L^1(\RR)$ and $\partial^\alpha g(x)\rightarrow 0$ as $|x|\rightarrow\infty$ for all $|\alpha|\leq n$, then $
|\mathcal{F}[g](k)|\leq\frac{C}{(1+|k|)^n}
$
with some positive constant $C$.
From the smoothness of $\partial_\nu F$ on $\p B_R$ and $\p B_r$, each component of $U[\partial_\nu F]$ then belongs to $L^1(\mathbb{R})\cap L^2(\mathbb{R})$ so that 
\beq\label{cond:F}
PU[\partial_\nu F]\in L^1(\mathbb{R})\cap L^2(\mathbb{R}).
 \eeq
From \eqnref{def:matrix:S:K}, \eqnref{eqn:KstarOMonKmonehalf} and \eqnref{eqn:intgralequationforplasmonicresonance}, $\hvarphi_\delta$ satisfies
\begin{equation*}
 (\lambda_\delta I -\mathbb{K})[\widehat{\varphi}^\delta] = PU[\partial_\nu F],
\end{equation*}
which leads to
\begin{equation}\label{varphi_delta:matrix}
\widehat{\varphi}^\delta=  \begin{bmatrix}
   ( \lambda_\delta - \frac{1}{2}e^{-|k|q})^{-1} &0\\[2mm]
    0 &(\lambda_\delta + \frac{1}{2}e^{-|k|q})^{-1}
  \end{bmatrix}PU[\partial_\nu F].
\end{equation}
Because of $\p_\nu F\in L^2_0(\p\Om)$, from \eqnref{Fourier:L2_0}, the second entry of $PU[\p_\nu F]$ satisfies $O(|k|^\frac{1}{4})$ as $k\rightarrow 0$. 
From the assumption that $\lambda_\delta\in \RR^2\setminus[-\frac{1}{2},\frac{1}{2}]$, it then holds that 
\begin{equation*}
  \widehat{\varphi}_1^\delta,\widehat{\varphi}_2^\delta\in L^1(\RR)\cap L^2(\RR)\quad\mbox{and}\quad\widehat{\varphi}^{\delta}_{2}\in O(|k|^\frac{1}{4}).
\end{equation*}
% Then, from Lemma \ref{Scal:behavior}\,(a) and Lemma \ref{lemma:gradientvalue}, the single-layer potential $\Scal_{\p\Om}\big[\varphi^\delta\big](z)$ belongs to $H^1(\RR^2)$. As $\varphi^\delta$ satisfies \eqnref{eqn:intgralequationforplasmonicresonance}, the solution to \eqnref{eqn:u_delta} with  $\epsilon$ given by \eqnref{ep:Om} satisfies
% \begin{equation*}
%   u_\delta(z)=F(z)+\Scal_{\p\Om}\big[{\varphi}^\delta\big](z).
% \end{equation*}
Then, from Lemma \ref{Scal:behavior}\,(a), $\Scal_{\p\Om}[\varphi^\delta]$ is continuous in $\RR^2$, harmonic in $\RR^2\setminus\p\Om$, and harmonic at infinity. Furthermore, from Lemma \ref{lemma:gradientvalue}, $|\nabla\Scal_{\p\Om}[\varphi]|\in L^2(\RR^2)$. As $\varphi^\delta$ satisfies \eqnref{eqn:intgralequationforplasmonicresonance}, the solution to \eqnref{eqn:u_delta} with  $\epsilon$ given by \eqnref{ep:Om} satisfies
\begin{equation*}
  u_\delta(z)=F(z)+\Scal_{\p\Om}\big[{\varphi}^\delta\big](z).
\end{equation*}
Since $F$ is smooth in a region containing $\overline{\Om}$, \eqnref{eqn:resonanceconditionintro} is equivalent to $\big\lVert\nabla \SingleOmega[\varphi^\delta]\big\rVert_{L^2(\Omega)}\rightarrow \infty$ as $\delta\rightarrow 0$.
From \eqnref{eqn:varphideltaint} and \eqnref{eqn:lambda}, we have
\begin{equation}\label{eqn:varphideltanorm}
  \lVert\varphi^\delta\rVert^2_{-1/2}= \int_{-\frac 1 2}^{\frac 1 2}\frac{1}{|\lambda_\delta - t|^2}\,d\big\langle\partial_\nu F,\, \mathbb{E}(t)\partial_\nu F\big\rangle_{-1/2}
\end{equation}
with
\begin{align}\label{eqn:lambdaminust}
	\lambda_\delta - t 
  % =\frac{\epsilon_c+1-2(\epsilon_c-1)t+i\delta(1-2t)}{2(\epsilon_c-1)+2i\delta}
	=\frac{(\lambda_0-t)+\frac{1-2t}{2(\epsilon_c-1)}i\delta}{1+\frac{1}{\epsilon_c-1}i\delta}
	\quad\mbox{with } \lambda_0 = \frac{\epsilon_c+1}{2(\epsilon_c-1)}.
\end{align}
Note that $\lambda$ converges to $\lambda_0$ as $\delta\rightarrow 0$. 
We see from  Lemma \ref{lemma:gradientvalue} and Theorem \ref{thm:spectrumtheoremKstarOmega} that
\begin{align*}
	 \big\lVert\nabla \SingleOmega[\varphi^\delta]\big\rVert_{L^2(\Omega)}^2
	 &=\int_{-\frac 1 2}^{\frac 1 2}\Big(\frac{1}{2}-t\Big)d\big\langle \varphi^\delta,\, \mathbb{E}(t)\,\varphi^\delta\big\rangle_{-1/2}\\
	 &=\int_{-\frac 1 2}^{\frac 1 2}\Big(\frac{1}{2}-t\Big)\frac{1}{|\lambda_\delta - t|^2}\,d\big\langle\partial_\nu F,\, \mathbb{E}(t)\partial_\nu F\big\rangle_{-1/2}.
\end{align*}
For any $\varphi\in K^{-1/2}_0$, we have $\frac{d}{dt}\big\langle \varphi,\, \mathbb{E}(t)\varphi\big\rangle_{-1/2}\geq 0$ from Lemma \ref{lemma:measuredensity}.
Then, \eqnref{eqn:varphideltanorm} leads us to 
\beq\label{Scal:decay:all}
\big\lVert\nabla \SingleOmega[\varphi^\delta]\big\rVert_{L^2(\Omega)}\leq \lVert\varphi^\delta\rVert_{-1/2}.
\eeq

We now assume that $\lambda_0<\frac{1}{2}$. Then, we can take $t_1,t_2$ independent of $\delta$ such that $\lambda_0<t_1<t_2<\frac{1}{2}$.
Then, we have
\begin{align*}
 \big\lVert\nabla \SingleOmega[\varphi^\delta]\big\rVert_{L^2(\Omega)}^2
&=\bigg(\int_{-\frac{1}{2}}^{t_2}+\int_{t_2}^{\frac{1}{2}}\bigg)\Big(\frac{1}{2}-t\Big)\frac{1}{|\lambda_\delta - t|^2}\,d\big\langle\partial_\nu F,\, \mathbb{E}(t)\partial_\nu F\big\rangle_{-1/2}\\
 &\geq \Big(\frac{1}{2} -t_2\Big) \int_{-\frac 1 2}^{t_2}\frac{1}{|\lambda_\delta - t|^2}\,d\big\langle\partial_\nu F,\, \mathbb{E}(t)\partial_\nu F\big\rangle_{-1/2}\\
 &=\Big(\frac{1}{2} -t_2\Big)\bigg( \lVert\varphi^\delta\rVert_{-1/2}^2 - \int_{t_2}^{\frac 1 2} 
 \frac{1}{|\lambda_\delta - t|^2}\,d\big\langle\partial_\nu F,\, \mathbb{E}(t)\partial_\nu F\big\rangle_{-1/2}\bigg)\\
 &\geq \Big(\frac{1}{2} -t_2\Big)\Big(\lVert\varphi^\delta\rVert_{-1/2}^2 - \frac{1}{|\lambda_\delta-t_2|^2}\lVert\p_\nu F\rVert_{-1/2}^2 \Big).
 \end{align*}
Hence, we have
\beq\label{S:varphi:upper}
C_1\lVert\varphi^\delta\rVert_{-1/2} -C_2\leq\big\lVert\nabla \SingleOmega[\varphi^\delta]\big\rVert_{L^2(\Omega)}\leq \lVert\varphi^\delta\rVert_{-1/2}\quad\mbox{for }\lambda_0<\frac{1}{2},
\eeq
where $C_1,C_2$ are some positive constants independent of $\delta$. 
Hence, for $\lambda_0<\frac{1}{2}$, the resonance condition 
$\big\lVert\nabla \SingleOmega[\varphi^\delta]\big\rVert_{L^2(\Omega)}\rightarrow \infty$ as $\delta\rightarrow 0$
is equivalent to that
$\lVert\varphi^\delta\rVert_{-1/2}\rightarrow \infty$ as $\delta \rightarrow 0$. 
In the following, we characterize the resonance depending on $\lambda_0$.

Lemma \ref{lemma:measuredensity} leads us to
\begin{equation}\label{def:functionQ}
d\langle\partial_\nu F,\, \mathbb{E}(t)\partial_\nu F\rangle_{-1/2}=Q(t)\, dt
\end{equation}
with
\begin{align}\label{eqn:functionQ} 
  Q(t)=\begin{dcases}
  \frac{1+2t}{2|t\,{\ln(2|t|)}|}[\,\big| \widehat{f}_1(k)\big|^2+\big|\widehat{f}_1(-k)\big|^2\,]\quad&\text{for }t\in[-1/2,0),\\[1pt]
  \frac{1+2t}{2|t\, {\ln(2|t|)}|}[\,\big|\widehat{f}_2(k)\big|^2+\big|\widehat f_2(-k)\,\big|^2]\quad&\text{for }t\in(0,1/2],
  \end{dcases}
\end{align}
where $k = -\frac{1}{q}\ln(2|t|)$ and $
[\widehat{f}_1, \widehat{f}_2]^T=PU(\partial_\nu F)$.
From \eqnref{Q:L1}, $Q$ belongs to $ L^1(\RR)$. The two functions $\widehat f_1,\widehat f_2$ are bounded, continuous, and vanishing at infinity since $\partial_\nu F$ is smooth on $\p B_R $ and $\p B_r$. Since $\partial_\nu F$ has a zero mean on each $\partial B_r$ and $\partial B_R$, the two components of $U(\partial_\nu F)$ are both zero at $k=0$. Hence, $Q(t)$ is continuous for $t\in(-\frac 1 2,0)\cup (0,\frac 1 2)$. For $t=\pm\frac{1}{2}$, we have 
\begin{align}\label{eqn:Q:limit}
  \lim_{t\rightarrow(-\frac{1}{2})^+}Q(t)=2\big|\widehat f_1(0)\big|^2=0,\quad  \lim_{t\rightarrow\frac{1}{2}^-}Q(t)=2\big|\widehat f_2(0)\big|^2  =0.
\end{align}

In view of \eqnref{S:varphi:upper}, we can determine the order of resonance from the following propositions:
\begin{prop}\label{prop:blowupinterval}
For $\lambda_0\in[-\frac{1}{2},\,0)\cup (0, \frac{1}{2}),$ let $\varphi^\delta$ be the solution to \eqnref{eqn:intgralequationforplasmonicresonance} and $
[\widehat{f}_1, \widehat{f}_2]^T=PU(\partial_\nu F)$.  Then, it holds that
\begin{equation*}
  \lim_{\delta\rightarrow 0} \delta\lVert\varphi^\delta\rVert^2_{-1/2}
  = \frac{|\epsilon_c-1|}{\frac 1 2-\lambda_0}
  \begin{cases}
  Q(\lambda_0)&\text{if }\lambda_0\in(-\frac 1 2,\, 0)\cup(0,\, \frac 1 2),\\[1mm]
 0&\text{if }\lambda_0=-\frac 1 2,
  \end{cases}
\end{equation*}  
where $Q$ is given by \eqnref{eqn:functionQ}.
\end{prop}
\begin{proof}
Let $\lambda_0\in[-\frac{1}{2},\,0)\cup (0, \frac{1}{2})$. Then, we can assume that $\epsilon_c$ is bounded. 
From \eqnref{eqn:varphideltanorm} and \eqnref{eqn:lambdaminust}, we have
\begin{align*}
\frac{1}{|\lambda_\delta - t |^2}
	&=\frac{1+(\frac{1}{\epsilon_c-1}\delta)^2}{(\lambda_0-t)^2+(\frac{1}{2}-t)^2(\frac{1}{\epsilon_c-1}\delta)^2}\nonumber\\\notag
%	&=\frac{1+{\tilde{\delta}}^2}{(1+{\tilde{\delta}}^2)(t-\frac{2\lambda_0+{\tilde{\delta}}^2}{2(1+{\tilde{\delta}}^2)})^2+\frac{{\tilde{\delta}}^2(1-2\lambda_0)^2}{4(1+{\tilde{\delta}}^2)}}
&=\frac{1+{\tilde{\delta}}^2}{(\lambda_0-t)^2+(\frac{1}{2}-t)^2{\tilde{\delta}}^2}
	=\frac{1}{\Big(t-\frac{2\lambda_0+{\tilde{\delta}}^2}{2(1+{\tilde{\delta}}^2)}\Big)^2+\frac{{\tilde{\delta}}^2(1-2\lambda_0)^2}{4(1+{\tilde{\delta}}^2)^2}}
,\quad \tilde{\delta}=\Big|\frac{\delta}{\epsilon_c-1}\Big|.
\end{align*}
We then have from \eqnref{eqn:varphideltanorm} and \eqnref{def:functionQ} that
\begin{align}\label{eqn:infactPoissonkernel}
	\lVert\varphi^\delta\rVert^2_{-1/2}
%	&= \int_{-\frac 1 2}^{\frac 1 2}\frac{1}{|\lambda_\delta - t|^2}\, d\big\langle\partial_\nu F,\, \mathbb{E}(t)\partial_\nu F\big\rangle_{-1/2}\nonumber\\
	&= \frac{\pi\big(1+{\tilde{\delta}}^2\big)}{{\tilde{\delta}}\big(\frac{1}{2}-\lambda_0\big)}\int_{-\infty}^{\infty}\frac{1}{\pi}\frac{\frac{{\tilde{\delta}}(1-2\lambda_0)}{2(1+{\tilde{\delta}}^2)}}{\Big(t-\frac{2\lambda_0+{\tilde{\delta}}^2}{2(1+{\tilde{\delta}}^2)}\Big)^2+\frac{{\tilde{\delta}}^2(1-2\lambda_0)^2}{4(1+{\tilde{\delta}}^2)^2}}\, Q(t)\,\chi_{(-\frac 1 2,\, \frac 1 2)}\, dt.
\end{align}

Now, we simplify \eqnref{eqn:infactPoissonkernel} using the property for the Poisson kernel of the upper half plane:
%The expression \eqnref{eqn:infactPoissonkernel} corresponds to the Poisson kernel for the upper half plane. 
for $g\in L^1(\mathbb{R})$ and $x\in\RR$,
\begin{align}\notag
  \lim_{y\rightarrow 0^+}\frac{1}{\pi}\int_{-\infty}^\infty\frac{y}{(x-t)^2+y^2} g(t)dt
  =\frac{g(x^-)+g(x^+)}{2}
\end{align}
if $g(x^-)$ and $g(x^+)$ exist. Here, $g(x^-)$ and $g(x^+)$ indicate the limit from the left and the right, respectively. From \eqnref{eqn:Q:limit} and the continuity of $Q(t)$ on $t\in(-\frac 1 2,0)\cup (0,\frac 1 2)$, we complete the proof of the lemma. 
\end{proof}

\begin{prop}
Suppose $\lambda_0=0$ (or, equivalently, $\epsilon_c=-1$). Let $\varphi^\delta$ be the solution to \eqnref{eqn:intgralequationforplasmonicresonance}.
Then, it holds that
\begin{equation*}
  \lim_{\delta\rightarrow 0} \delta^2\lVert\varphi^\delta\rVert^2_{-1/2}=0.
\end{equation*}
\end{prop}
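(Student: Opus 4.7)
The plan is to bound $\|\varphi^\delta\|^2_{-1/2}$ directly from the Poisson-kernel representation \eqnref{eqn:infactPoissonkernel}, using an $L^\infty$ bound on the Poisson kernel together with the fact that $Q$ is globally integrable on $(-1/2,1/2)$.

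First, I will specialize \eqnref{eqn:infactPoissonkernel} and \eqnref{eqn:oneoverlambdaminustsq} to $\lambda_0=0$ (i.e.\ $\epsilon_c=-1$). In this case $B=\delta/|\epsilon_c-1|=\delta/2$, the center of the Poisson kernel is $c:=B^2/(2(1+B^2))$, and its height is $\epsilon:=B/(2(1+B^2))$. Using the trivial bound $\frac{1}{\pi}\frac{\epsilon}{(t-c)^2+\epsilon^2}\le \frac{1}{\pi\epsilon}$ pulls the Poisson factor out of the integral and yields
\begin{equation*}
\|\varphi^\delta\|^2_{-1/2}\le \frac{2\pi(1+B^2)}{B}\cdot\frac{1}{\pi\epsilon}\,\|Q\|_{L^1(-1/2,1/2)}=\frac{4(1+B^2)^2}{B^2}\,\|Q\|_{L^1(-1/2,1/2)}.
\end{equation*}
Multiplying by $\delta^2=4B^2$ immediately gives $\delta^2\|\varphi^\delta\|^2_{-1/2}\le 16(1+B^2)^2\|Q\|_{L^1(-1/2,1/2)}$, which remains bounded as $\delta\to 0$ provided that $Q\in L^1$.

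The only remaining task is to verify $Q\in L^1(-1/2,1/2)$. Away from $0$ and $\pm 1/2$ the function $Q$ is continuous, and the boundary limits computed just before the proposition, $\lim_{t\to -1/2^+}Q(t)=2|F_1(0)|^2$ and $\lim_{t\to 1/2^-}Q(t)=0$, show $Q$ stays bounded at the endpoints. The genuine issue is at $t=0$, where the prefactor $\frac{1}{|t\ln(2|t|)|}$ in \eqnref{eqn:functionQ} blows up. To kill this singularity I will exploit the smoothness of $\partial_\nu F$ noted just before Lemma \ref{lemma:gradientvalue}: since $\partial_\nu F$ together with its derivatives is integrable, the hats $\hat F_1,\hat F_2$ decay faster than any polynomial, so in particular $|\hat F_j(k)|\le C(1+|k|)^{-2}$. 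Substituting $k=\pm\ln(2|t|)/q$ into \eqnref{eqn:functionQ} then produces $Q(t)\le C'|t|^{-1}|\ln(2|t|)|^{-5}$ on a neighborhood of $0$, which is integrable there.

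The only delicate step is the bookkeeping near $t=0$, where the polynomial decay of $\hat F_j$ in the Fourier variable $k$ must be matched to the logarithmic change of variables $k=\ln(2|t|)/q$. The rest is mechanical: once $Q\in L^1$ is in hand, the $L^\infty$ bound on the Poisson kernel loses a factor $\epsilon\sim\delta$, which is compensated exactly by the $1/B\sim 1/\delta$ prefactor in \eqnref{eqn:infactPoissonkernel} and by the extra $\delta^2$ factor on the left, yielding the stated $O(1)$ limsup. (In fact this argument even gives $\limsup_{\delta\to 0}\delta^2\|\varphi^\delta\|^2_{-1/2}\le 16\|Q\|_{L^1}$, so no finer analysis is required.)
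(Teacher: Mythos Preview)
Your proof is correct and follows essentially the same route as the paper: bound the Poisson kernel in $L^\infty$ and pull it out of the integral, leaving $(4+\delta^2)^2\|Q\|_{L^1(-1/2,1/2)}$, which is exactly the paper's estimate. Your explicit verification that $Q\in L^1$ near $t=0$ via the polynomial decay of $\hat F_j$ is correct but more than is needed, since $Q\ge 0$ and $\int_{-1/2}^{1/2}Q(t)\,dt=\|\partial_\nu F\|_{-1/2}^2<\infty$ already follows from Proposition~\ref{prop:innerproductrep}.
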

\begin{proof}

One can derive that $\lim_{\delta\rightarrow 0} 
 \delta^2\lVert\varphi^\delta\rVert^2_{-1/2}=0$ by following the proof of Proposition 4 in \cite{{Kang:2017:SRN}} with the fact that $Q\in L^1(\RR)$.  
\end{proof}

%%%%%%%%%%%%%%%%%%%%%%%%%
%%%%%%%%%%%%%%%%%%%%%%%%%

\section{Analysis of the NP operator on touching disks}
In this section, we provide the spectral resolution of the NP operator on the touching disks $\Omega=B_R\cup B_{-r}$, following the notation in \eqnref{def:h_a}. The two disks $B_R$ and $B_{-r}$ are tangent to each other at the origin (see the left figure in Figure \ref{fig:exttouchingdomains}). We follow the derivations for the result on a crescent-shaped domain in the previous sections. We omit most proofs since the analysis is almost the same as for the crescent-shaped domain case.

\begin{figure}[b!]
\subfloat[$z$-plane, $z=z_1+iz_2$]{\includegraphics[height=4.5cm, width=5.72cm, trim={3.6cm 2cm 2cm 2cm}, clip]{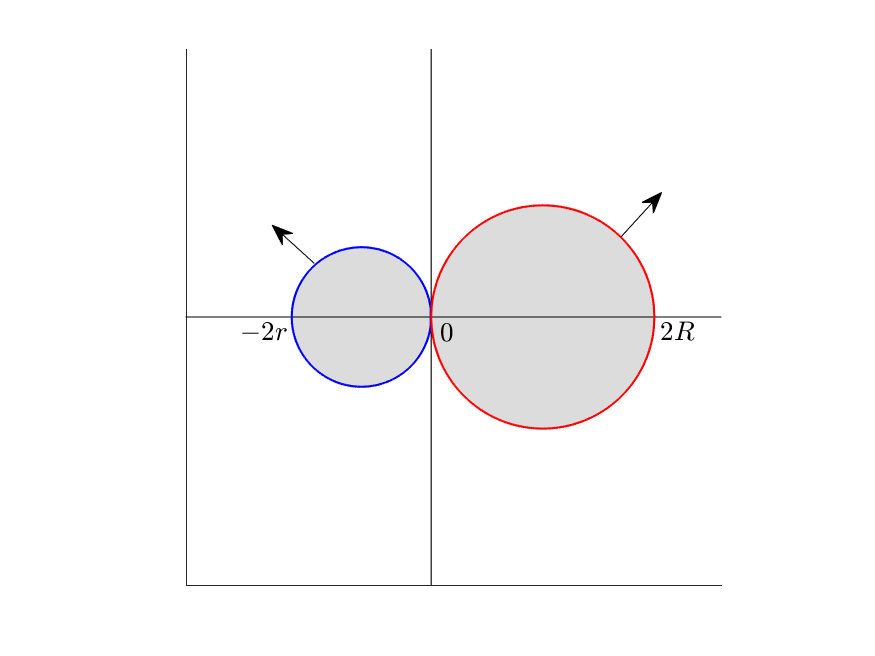}}
\subfloat[$w$-plane, $w=x+iy$]{\includegraphics[height=4.5cm,width=5.72cm, trim={3.6cm 2cm 2cm 2cm}, clip]{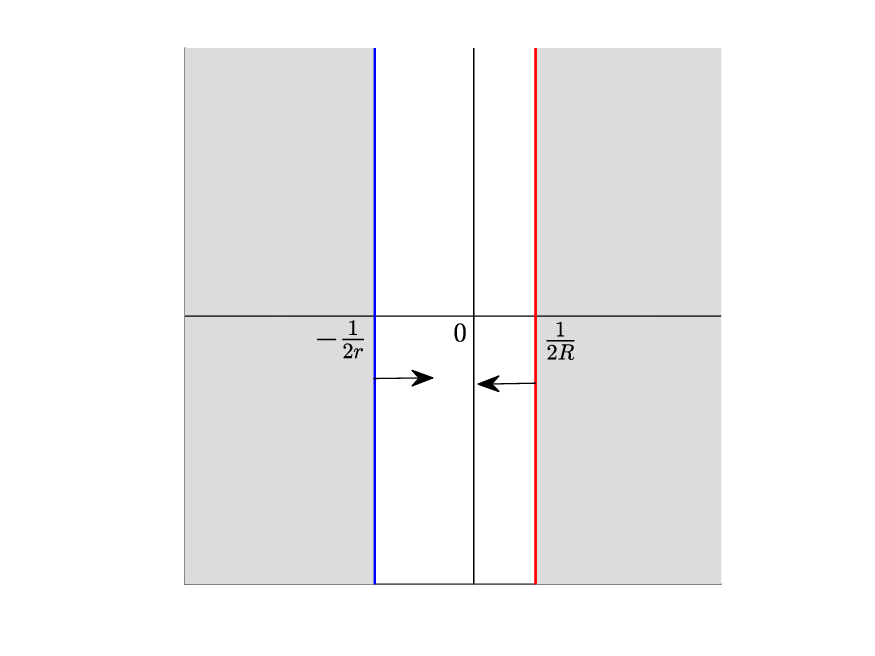}}
\caption{Touching disks $\Omega=B_R\cup B_{-r}$ (gray region in the left figure) and $S=\Psi(\Omega)$ (gray region in the right figure). Arrows indicate the outward normal vectors to $\partial\Omega$ or to $\partial S$.}
\label{fig:exttouchingdomains}
\end{figure}

The touching disks $\Omega$ is mapped onto the region
\begin{equation*}
  S := \Psi(\Omega) = \Big(-\infty,-\frac{1}{2r}\Big)\times
  (-\infty,\infty)\cup \Big(\frac{1}{2R},\infty\Big)\times
  (-\infty,\infty)
\end{equation*}
via the M\"{o}bius mapping $\Psi$ defined in \eqnref{eqn:Mobius}, and vice versa. The outward normal vector convention is described in Figure \ref{fig:exttouchingdomains}. For a function $v$, the normal derivative at $z\in\partial\Omega$ with $\Psi(z)=x+iy$ is
\begin{align*}
  \frac{\partial u}{\partial\nu}&=-\frac{1}{h_R(y)}\frac{\partial(u\circ\Psi)}{\partial x}\quad\text{for } z\in\partial B_R\setminus\{0\},\\
  \frac{\partial u}{\partial\nu}&=\frac{1}{h_{-r}(y)}\frac{\partial(u\circ\Psi)}{\partial x}\quad\text{for } z\in\partial B_{-r}\setminus\{0\},
\end{align*}
where $h_R(y)$ and $h_{-r}(y)$ are defined as in \eqnref{def:h_a}.
Similar to the crescent-shaped domain case, we denote by $\tq$ the distance between the two boundary lines of $S$, i.e., 
\beq\label{def:q:touching}
\tq:=\frac{1}{2R}+\frac{1}{2r}.
\eeq

\subsection{Generalization of the layer potential operators on the crescent-shaped domain}

A density function $\varphi\in\LtwobdOmega$ can be decomposed as
\begin{align}
	\varphi &= \varphi\,\chi_{\p B_R}+\varphi\,\chi_{\partial B_{-r}} \notag \\ \label{def:varphiR_mr}
	&=:\varphi_R +\varphi_{-r}.
\end{align}
We identify $\varphi_R$, $\varphi_{-r}$ with the functions on $\RR$ given by
\beq\label{varphi:tilde2}
\begin{aligned}
	\widetilde{\varphi}_R(y)&=(\varphi_R\circ\Psi^{-1})\Big(\frac{1}{2R}+iy\Big),\\
	\widetilde{\varphi}_{-r}(y)&=(\varphi_{-r}\circ\Psi^{-1})\Big(-\frac{1}{2r}+iy\Big) \quad\mbox{for }y\in\RR.
\end{aligned}
\eeq

We now define the single-layer potential and the NP operator on touching disks. One can easily find that the jump relations in Lemma \ref{SK:relations} holds for touching disks.
\begin{definition}\label{SK:touching}
	For $\varphi=\varphi_R+\varphi_{-r}\in L^2(\p\Om)$, we define 
	\begin{align}\label{def:SignleOmegadef2}
		\SingleOmega[\varphi]
		:=\SingleBR[\varphi_R]
		+ \SingleBmr[\varphi_{-r}]\quad\mbox{on }\RR^2
	\end{align}
	and
	\begin{align*}
		\KstarOmega[\varphi]
		:=&\Big(\KstarBR[\varphi_R] + \frac{\partial}{\partial\nu}\SingleBmr[\varphi_{-r}]\Big|_{\partial B_R}\Big)\chipBR\\
		&+\Big(\frac{\partial}{\partial\nu}\SingleBR[\varphi_R]\Big|_{\partial B_{-r}} + \KstarBmr[\varphi_{-r}]\Big)\chi_{\partial B_{-r}} \quad\text{on } \partial\Omega.
	\end{align*}
\end{definition}

We set $z=\frac{1}{x+iy}$ for $z\neq 0$. Let $z_t=\Psi^{-1}({-\frac{1}{2r}+it})$ on $\p B_{-r}$ and $z_t=\Psi^{-1}(\frac{1}{2R}+it)$ on $\p B_R$; then the single-layer potential \eqnref{def:SignleOmegadef2} satisfies
\beq \label{Single:touching}
\begin{aligned}
  \SingleOmega[\varphi](z) 
  &=\frac{1}{2\pi}\int_{\partial B_{-r}}\ln|z-z_t|\varphi_{-r}(z_t)\, d\sigma (z_t)+\frac{1}{2\pi}\int_{\partial B_{R}}\ln|z-z_t|\varphi_R(z_t)\, d\sigma (z_t)\\
  &=\frac{1}{4\pi}\int_{-\infty}^\infty 
  \bigg(\ln\Big[\Big(x+\frac{1}{2r}\Big)^2+(y-t)^2\Big]-\ln\Big[\Big(\frac{1}{2r}\Big)^2+t^2\Big]\bigg)\tvarphi_{-r}(t)h_{-r}(t)\,dt\\
  &\hskip .2cm +\frac{1}{4\pi}\int_{-\infty}^\infty 
  \bigg(\ln\Big[\Big(x-\frac{1}{2R}\Big)^2+(y-t)^2\Big]-\ln\Big[\Big(\frac{1}{2R}\Big)^2+t^2\Big]\bigg)\tvarphi_R(t)h_R(t)\,dt\\
 &\hskip .2cm -\frac{1}{4\pi}\ln(x^2+y^2)\int_{\partial\Omega}\varphi(z)\, d\sigma(z).
\end{aligned}
\eeq
Thus, we have
\begin{align*}
  \frac{\partial}{\partial\nu}\SingleBmr[\varphi_{-r}]\Big|_{\partial B_R}(z)
  &=-\frac{1}{2\pi h_R(y)}
      \int_{-\infty}^\infty \frac{\tq}{\tq^2+(y-t)^2}\tvarphi_{-r}(t)h_{-r}(t)\, dt+\frac{1}{4\pi R}\int_{-\infty}^\infty\tvarphi_{-r}(t)h_{-r}(t)\, dt,\\
  \frac{\partial}{\partial\nu}\SingleBR[\varphi_R]\Big|_{\partial B_r}(z)
    &=-\frac{1}{2\pi h_{-r}(y)}
      \int_{-\infty}^\infty \frac{\tq}{\tq^2+(y-t)^2} \tvarphi_R(t)h_R(t)\, dt+\frac{1}{4\pi r}\int_{-\infty}^\infty\tvarphi_R(t)h_R(t)\,dt
\end{align*}
with $\tq$ given by \eqnref{def:q:touching}.
The remaining terms of $\KstarOmega$ are
\begin{align*}
  \KstarBR[\varphi_R]&=\frac{1}{4\pi R}\int_{\partial B_R}\varphi_R\, d\sigma\quad\mbox{on }\partial B_R,\\
  \KstarBmr[\varphi_{-r}]&=\frac{1}{4\pi r}\int_{\partial B_{-r}}\varphi_{-r}\, d\sigma\quad\mbox{on }\partial B_{-r}.
\end{align*}
As a result, we obtain that
\begin{equation}\label{K:touching}
	\KstarOmega[\varphi](z)
	 =
	\begin{dcases}
		-\frac{1}{2\pi h_R(y)}
		\int_{-\infty}^\infty \frac{\tq}{\tq^2+(y-t)^2} \tvarphi_{-r}(t) h_{-r}(t) dt +\frac{1}{4\pi R}\int_{\partial\Omega}\varphi\, d\sigma
		\quad&\mbox{for }x=\frac{1}{2R},\\
		- \frac{1}{2\pi h_{-r}(y)}
		\int_{-\infty}^\infty \frac{\tq}{\tq^2+(y-t)^2}\tvarphi_R(t)h_R(t) dt+\frac{1}{4\pi r}\int_{\partial\Omega}\varphi \, d\sigma \quad&\mbox{for }x=-\frac{1}{2r}.
	\end{dcases}
\end{equation}

Similar to the crescent-shaped domain case, we define
\begin{align}
  U[\varphi]
&=
  \begin{bmatrix}
    \mathcal{F}[h_R \widetilde{\varphi}_R]\\
    \mathcal{F}[h_{-r} \widetilde{\varphi}_{-r}]
   \end{bmatrix},
   \end{align}
   where $\mathcal{F}$ is the Fourier transform (see Subsection \ref{subsec:layer:Fourier} for the definition). 
By applying the Fourier transform to \eqnref{Single:touching} and \eqnref{K:touching}, we express the layer potential operators as follows. 
\begin{lemma}\label{lemma:SLFourierDisks}
 Let $\varphi = \varphi_R+\varphi_{-r}\in \LtwozerobdOmega$. For $z=\Psi^{-1}(x+iy)\in \partial\Omega$, we have
\beq\notag
\begin{aligned}
	\SingleOmega[\varphi](z)
	= -\frac{1}{\sqrt{2\pi}}\int_{-\infty}^\infty\frac{1}{2|k|}
	\Big(e^{-|x-\frac{1}{2R}||k|}\mathcal{F}[h_R\widetilde{\varphi}_R](k)
	+e^{-|x+\frac{1}{2r}||k|}\mathcal{F}[h_{-r}\widetilde{\varphi}_{-r}](k) \Big) e^{iky}\, dk+C,  \end{aligned}
\eeq
where $C$ is the constant given by 
\begin{equation*}
  C=\frac{1}{\sqrt{2\pi}}\int_{-\infty}^\infty\frac{1}{2|k|}
  \Big(e^{-\frac{1}{2R}|k|}\mathcal{F}[h_R\widetilde{\varphi}_R](k)
  +e^{-\frac{1}{2r}|k|}\mathcal{F}[h_{-r}\widetilde{\varphi}_{-r}](k) \Big) dk.
\end{equation*}
\end{lemma}
\begin{lemma}\label{lemma:KstarFourierDisks}
For $\varphi\in \LtwozerobdOmega$, it holds that
\begin{equation*}
  U[\KstarOmega[\varphi]]
   = P^{-1}\Big(\frac{1}{2}e^{-|k|\tq}
   \begin{bmatrix}
    1&0\\0&-1
   \end{bmatrix}
   P\Big)U[\varphi]
\end{equation*}
with $\tq$ given by \eqnref{def:q:touching}.
\end{lemma}

\subsection{Spectral resolution of the NP operator on touching disks}
We define two matrix-valued functions in Lemma \ref{lemma:SLFourierDisks} and Lemma \ref{lemma:KstarFourierDisks} as 
\begin{equation}\label{def2:matrix:S:K}
\begin{aligned}
  \widetilde{\mathbb{S}}&= -\frac{1}{2|k|}\begin{bmatrix}
     1-e^{-\tq|k|}&0\\
     0 &1+e^{-\tq|k|}
   \end{bmatrix},\quad k\in\mathbb{R}\setminus\{0\},\\[2mm]
     \widetilde{\mathbb{K}}&=\frac{1}{2}e^{-\tq|k|}
  \begin{bmatrix}
    1&0\\0&-1
   \end{bmatrix},\quad k\in\mathbb{R}.  
   \end{aligned}
\end{equation}
Note that the definition of the matrix $\widetilde{\mathbb{S}}$ is identical to $\mathbb{S}$ of the crescent-shaped domain case in \eqnref{def:matrix:S:K}, except that $q$ is now replaced by $\tq$. Note also that the signs of the diagonal entries of $\widetilde{\mathbb{K}}$ are changed from the diagonal entries of $\mathbb{K}$ of the crescent-shaped domain case (see \eqnref{def:matrix:S:K}).

\begin{definition}\label{def:K_02}
We define $K^{-1/2}_0$ as the same as Definition \ref{def:K_0} with $\mathbb{S}$ replaced by $\widetilde{\mathbb{S}}$. In other words,\beq\notag
	K^{-1/2}_0:=\bigg\{\varphi=U^{-1}P\hvarphi\ \,\Big\vert\, \hvarphi=\begin{bmatrix}\widehat{\varphi}_1\\\widehat{\varphi}_2
	\end{bmatrix}\mbox{ satisfying } \int_{-\infty}^\infty
	\hvarphi^T
	(-\widetilde{\mathbb{S}})~
	\overline{\hvarphi} \, dk<\infty\bigg\} ,
	\eeq  where $\hvarphi_1$ and $\hvarphi_2$ are measurable functions on $\RR$, and $P$ is given by \eqnref{P:def}.
	
	Also, we define the inner product $\la \cdot,\cdot\ra_{-1/2}$ and the norm $\lVert\cdot\rVert_{-1/2}$ on $K^{-1/2}_0$ as the same as in Section \ref{subsection:newspaceNPoperator} with $\mathbb{S}$ replaced by $\widetilde{\mathbb{S}}$.
	\end{definition}

We now naturally extend the single-layer potential and the NP operator on $\LtwozerobdOmega$ to $K^{-1/2}_0$ by generalizing the formulas in Lemma \ref{lemma:SLFourierDisks} and Lemma \ref{lemma:KstarFourierDisks}, respectively.

\begin{definition}
		Let $\varphi\in  K^{-1/2}_0$ be given by $\varphi=U^{-1}P\hvarphi$.
		\begin{enumerate}[(a), left=0.5em]
			\item  
			We define the NP operator $\KstarOmega:K^{-1/2}_0\rightarrow K^{-1/2}_0$ 
			by
			\begin{equation}
				\KstarOmega[\varphi]:=U^{-1}P\widetilde{\mathbb{K}}\hvarphi.
			\end{equation}
			
			\item
			We define the single-layer potential of $\varphi$: for $z=\Psi^{-1}(x+iy)\in\CC$,
			\beq 
			\begin{aligned}
				\SingleOmega[\varphi](z):= &-\frac{1}{\sqrt{2\pi}}\int_{-\infty}^\infty\frac{1}{2|k|}
				\Big(e^{-|x-\frac{1}{2R}||k|}\,\hvarphi_R(k)
				+e^{-|x+\frac{1}{2r}||k|}\,\hvarphi_r(k)\Big) e^{iky}\, dk \\
				&+\frac{1}{\sqrt{2\pi}}\int_{-\infty}^\infty\frac{1}{2|k|}
				\Big(e^{-\frac{|k|}{2R}}\,\hvarphi_R(k)+e^{-\frac{|k|}{2r}}\,\hvarphi_r(k)\Big)\, dk,
			\end{aligned}
			\eeq
			where $\hvarphi_R$ and $\hvarphi_r$ is given by 
			\beq
			\begin{bmatrix}
				\hvarphi_R\\ 
				\hvarphi_r
			\end{bmatrix}
			=P
			\begin{bmatrix}
				\hvarphi_1\\
				\hvarphi_2
			\end{bmatrix}
			=\frac{1}{\sqrt{2}}
			\begin{bmatrix}
				-\hvarphi_1+\hvarphi_2\\
				\hvarphi_1+\hvarphi_2
			\end{bmatrix}.
			\eeq
		\end{enumerate}
\end{definition}

From arguments similar to those for the crescent-shaped domain case, it can be shown that $\KstarOmega$ is a bounded linear operator on $K^{-1/2}_0$ whose operator norm is bounded by $\frac{1}{2}$. In addition, $\KstarOmega$ is self-adjoint on $K^{-1/2}_0$, and the spectrum of $\KstarOmega$ on $K^{-1/2}_0$ lies in the interval $[-1/2,1/2]$. In the remainder of this subsection, we derive the spectral resolution of the operator. 

To derive the spectral resolution of the NP operator on $K^{-1/2}_0$, we define a pair of orthogonal projection operators $\mathcal{P}_1(s)$ and $\mathcal{P}_2(s)$ on $K^{-1/2}_0$ for each $s\in\RR\cup\{\infty\}$.
Let $\varphi\in K^{-1/2}_0$ be given by $\varphi=U^{-1}P\hvarphi$ and $\hvarphi=[\hvarphi_1,\hvarphi_2]^T$. We define 
\begin{align*}
	\mathcal{P}_1(s)\varphi&= U^{-1}P\begin{bmatrix}
		\chi_{(-\infty,\, s\,]}\, \widehat{\varphi}_1\\[2mm]
		0
	\end{bmatrix},\quad
	\mathcal{P}_2(s)\varphi = U^{-1}P\begin{bmatrix}
		0\\
		\chi_{(-\infty,\, s\,]}\, \widehat{\varphi}_2
	\end{bmatrix}
\end{align*}
for $s\in\RR$ and 
\begin{equation*}
  \mathcal{P}_1(\infty)\varphi=U^{-1}P\begin{bmatrix}
  	\widehat{\varphi}_1\\[2mm]
  	0
  \end{bmatrix}
  ,\quad
  \mathcal{P}_2(\infty)\varphi = U^{-1}P\begin{bmatrix}
  	0\\
  	\widehat{\varphi}_2
  \end{bmatrix}.
\end{equation*}
Note that $\mathbb{I}=\mathcal{P}_2(\infty)+\mathcal{P}_1(\infty)$, where $\mathbb{I}$ is the identity operator on $K^{-1/2}_0$. Now, we define a family of projection operators $\mathbb{E}(t)$ on $K^{-1/2}_0$, $t\in [-1/2,1/2]$, as in \eqnref{def:resolutionoftheidentity}. 
We then define
\beq\label{def:tildeE}
\widetilde{\mathbb{E}}(t):=\mathbb{I}-\mathbb{E}(-t).
\eeq
In other words, 
%\begin{equation}\label{def:resolutionoftheidentity2}
%	\mathbb{E}(t)
%	:=\begin{cases}
%		 \mathcal{P}_1\Big(-\frac{\ln(-2t)}{q_1}\Big)-\mathcal{P}_1\Big(\frac{\ln(-2t)}{q_1} \Big), \quad&t\in[-1/2,0),\\[2mm]
%		 \mathcal{P}_1(\infty),\quad&t=0,\\[2mm]
%		  -\mathcal{P}_2\Big(-\frac{\ln(2t)}{q_1} \Big)+\mathcal{P}_2\Big(\frac{\ln(2t)}{q_1}\Big)+\mathbb{I}, \quad&t\in(0, 1/2].
%	\end{cases}
%\end{equation}
\begin{equation}\label{def:resolutionoftheidentity2}
	\widetilde{\mathbb{E}}(t)
	:=\begin{cases}
		  \mathcal{P}_2\Big(-\frac{\ln(-2t)}{\tq} \Big)-\mathcal{P}_2\Big(\frac{\ln(-2t)}{\tq}\Big), \quad&t\in[-1/2,0),\\
		 \mathcal{P}_2(\infty),\quad&t=0,\\[2mm]
		 -\mathcal{P}_1\Big(-\frac{\ln(2t)}{\tq}\Big)+\mathcal{P}_1\Big(\frac{\ln(2t)}{\tq} \Big)+\mathbb{I}, \quad&t\in(0,1/2].
	\end{cases}
\end{equation}
Then, the family of operators $\{\widetilde{\mathbb{E}}(t)\}_{t\in[-\frac 1 2, \frac 1 2]}$ is a resolution of identity on $K^{-1/2}_0$ and satisfies
 \beq\label{prop:innerproductrep2}
 \lim_{t\rightarrow s}\widetilde{\mathbb{E}}(t)=\widetilde{\mathbb{E}}(s)
 \eeq
 in the sense of strong convergence.

Following the derivations of \eqnref{psi:E:phi} and \eqnref{eqn:spectrumtheoremKstarOmega} for the case of the crescent-shaped domain, it can be shown that, for all $\psi,\varphi\in K^{-1/2}_0$, 
\begin{gather}\notag
		\langle \psi,\varphi\rangle_{-1/2}=\int_{-\frac{1}{2}}^{\frac 1 2}d\big\langle \psi,\, \mathbb{E}(t)\varphi\big\rangle_{-1/2},\\
\big\langle \psi, \KstarOmega[\varphi] \big\rangle_{-1/2}= -\int_{-\frac 1 2}^{\frac 1 2}\,t\, d\,\big\langle \psi, \mathbb{E}(t)\varphi \big\rangle_{-1/2}.\label{psi:K:phi:2}
\end{gather}
The right-hand side of \eqnref{psi:K:phi:2} has different sign of from that of \eqnref{eqn:spectrumtheoremKstarOmega} because the signs of the diagonal entries of $\widetilde{\mathbb{K}}$ are changed from the diagonal entries of $\mathbb{K}$ in \eqnref{def:matrix:S:K}. 
In view of \eqnref{def:tildeE}, we obtain the following. 
\begin{lemma}\label{lemma:resolution2}
	For all $\varphi,\psi\in K^{-1/2}_0$, it holds that
	\begin{equation}\notag
		\langle \psi,\varphi\rangle_{-1/2}=\int_{-\frac{1}{2}}^{\frac 1 2}d\big\langle \psi,\, \widetilde{\mathbb{E}}(t)\varphi\big\rangle_{-1/2}.
	\end{equation}
\end{lemma}

\begin{theorem}\label{thm:spectrumtheoremKstarOmega2}
	Let $\big\{\widetilde{\mathbb{E}}(t)\big\}_{t\in[-1/2,1/2]}$ be the resolution of the identity on $K^{-1/2}_0$ given by \eqnref{def:resolutionoftheidentity2}. Then we have the following spectral resolution of $\KstarOmega$ on $K^{-1/2}_0$:
	\begin{equation*}
		\KstarOmega = \int_{-\frac 1 2}^{\frac 1 2}\,t\, d\,\widetilde{\mathbb{E}}(t).
	\end{equation*}
	In other words, it holds that
	\begin{equation}\label{eqn2:spectrumtheoremKstarOmega}
		\big\langle \psi, \KstarOmega[\varphi] \big\rangle_{-1/2}= \int_{-\frac 1 2}^{\frac 1 2}\,t\, d\,\big\langle \psi, \widetilde{\mathbb{E}}(t)\varphi \big\rangle_{-1/2}\quad\mbox{for all }\psi,\varphi\in K^{-1/2}_0.
	\end{equation}
\end{theorem}

The condition $\lim_{t\rightarrow s^-}\widetilde{\mathbb{E}}(t) \neq \widetilde{\mathbb{E}}(s)$
characterizes the point spectrum of $\KstarOmega$.
From \eqnref{prop:innerproductrep2}, we conclude that $\KstarOmega$ has only a continuous spectrum. 
In view of \eqnref{def:tildeE} and Lemma \ref{lemma:measuredensity}, we have 
\begin{equation*}
  \frac{d}{dt}\big\langle \psi,\, \widetilde{\mathbb{E}}(t)\varphi\big\rangle_{-1/2}
  		=\frac{d}{ds}\big\langle \psi,\, {\mathbb{E}}(s)\varphi\big\rangle_{-1/2}{\bigg|}_{s=-t}
\end{equation*}
and
	\begin{align*}
		\frac{d}{dt}\big\langle \psi,\, \widetilde{\mathbb{E}}(t)\varphi\big\rangle_{-1/2}
		=\begin{dcases}
			  \frac{1-2t}{2|t\ln(2|t|)|}\Big(\,\hpsi_1(k)\, \overline{ \hvarphi_1(k)}+\hpsi_1(-k)\, \overline{\hvarphi_1(-k)}\, \Big) ,\quad&t\in[-1/2,0),\\[1mm]
			  \frac{1-2t}{2|t\ln(2|t|)|}\Big(\,\hpsi_2(k)\, \overline{ \hvarphi_2(k)}+\hpsi_2(-k)\, \overline{\hvarphi_2(-k)}\, \Big), \quad&t\in(0,1/2]
		\end{dcases}
	\end{align*}
	with  $k = -\frac{1}{\tq}\ln(2|t|)$. Similar to \eqnref{eqn2:lemma:measuredensity}, it holds that
\begin{align}\notag
  \int_{-\frac 1 2}^{\frac 1 2}\bigg|\frac{d}{dt}\big\langle \varphi,\, \widetilde{\mathbb{E}}(t)\varphi\big\rangle_{-1/2}\bigg|\, dt 
 &= \frac{1}{2}\int_{-\infty}^\infty\bigg(\frac{1-e^{-\tq|k|}}{|k|}\, |\hvarphi_1(k)|^2+\frac{1+e^{-\tq|k|}}{|k|}\, |\hvarphi_2(k)|^2\bigg)\, dk<\infty.
\end{align}
Finally, we obtain the following theorem.
\begin{theorem}
	The NP operator $\KstarOmega$ on $K^{-1/2}_0$ has only the absolutely continuous spectrum $[-\frac 1 2,\frac 1 2]$.
\end{theorem}

\bibliography{2020_Jung_SAN}
\bibliographystyle{plain}
% \begin{thebibliography}{10}

% \end{thebibliography}

\end{document}